\documentclass[a4paper,10pt]{article}

\usepackage[utf8]{inputenc}
\usepackage[english]{babel}
\usepackage[T1]{fontenc}
\usepackage{mathtools} 
\usepackage{stmaryrd} 
\usepackage{amssymb, amsmath, amsbsy, amsthm}
\usepackage{graphicx}
\usepackage{amsfonts}
\usepackage{color}
\usepackage{dsfont}
\usepackage{authblk}
\usepackage[titletoc]{appendix}

\usepackage[colorlinks=true, allcolors=blue, breaklinks=true]{hyperref}
\usepackage{verbatim}
\usepackage{tikz}
\usetikzlibrary{patterns}
\usetikzlibrary{intersections,quotes}
\usetikzlibrary{calc,fadings,decorations.pathreplacing,arrows,positioning,shapes, arrows.meta}
\usepackage{pgfplots}
\pgfplotsset{compat=newest}
\definecolor{vertfonce}{rgb}{0,0.392,0}
\usepackage{geometry}
\graphicspath{{Images/}}

\newenvironment{itemize*}%
{\begin{itemize}%
		\setlength{\itemsep}{0pt}%
		\setlength{\parskip}{0pt}}%
	{\end{itemize}}

\numberwithin{equation}{section}

\newcommand{\defeq}{\overset{\text{\tiny def}}{=}}
\usepackage[normalem]{ulem}
\usepackage{dsfont} 
\newcommand{\seqint}{{0\leq i \leq N-1}}

\newcommand{\dt}{\frac{d}{dt}}
\usepackage{credits}
\definecolor{OliveGreen}{rgb}{0,.5,0}

\theoremstyle{plain}
\newtheorem{thrm}{\thrmname}[section]
\newtheorem{lmm}[thrm]{\lmname}

\newtheorem{prpstn}[thrm]{\prpstnname}
\newtheorem{Hyp}[thrm]{\hpthsmname}
\providecommand{\thrmname}{Theorem}
\providecommand{\lmname}{Lemma}
\providecommand{\crllrname}{Corollary}
\providecommand{\prpstnname}{Proposition}
\theoremstyle{definition}
\newtheorem{dfntn}[thrm]{\dfntnname}
\newtheorem{rmrk}[thrm]{\rmrkname}
\providecommand{\dfntnname}{Definition}
\providecommand{\rmrkname}{Remark}
\providecommand{\hpthsmname}{Hypothesis}

\credit{FD}  {Conceptualization, Data curation, Funding acquisition, Writing -- original draft, Writing -- review \& editing}
\credit{AF}  {Conceptualization, Data curation, Software, Writing -- review \& editing}
\credit{IK}  {Conceptualization, Data curation, Writing -- original draft, Writing -- review \& editing}
\credit{AM}  {Conceptualization, Data curation, Formal analysis, Methodology, Software, Writing -- original draft, Writing -- review \& editing}
\credit{MM}  {Conceptualization, Formal analysis, Funding acquisition, Methodology, Software, Writing -- original draft, Writing -- review \& editing}
\credit{LT}  {Conceptualization, Funding acquisition, Methodology, Writing -- original draft, Writing -- review \& editing}

\begin{document}
	
\title{A Kuramoto phase model to explore the synchronisation of a network of circadian clocks.}

\author[1]{Franck Delaunay}
\author[1]{Antoine Fortune}
\author[1]{Ines Krawczyk}
\author[2]{Anastasia Maréchal}
\author[2,3]{Mathieu Mezache}
\author[2]{Laurent Tournier}

\affil[1]{Université Côte d'Azur, Inserm, CNRS, Institut de Biologie Valrose, France}
\affil[2]{Université Paris-Saclay, INRAE, MaIAGE (UR 1404), 78350 Jouy-en-Josas, France}
\affil[3]{Universit\'e Paris-Saclay, INRIA, MUSCA, 91120 Palaiseau, France}%

\date{\today}




\maketitle

\begin{abstract} 
	We propose a model of the circadian clock in a population of cells based on a network of oscillators, derived from the Kuramoto model.
The coupling between oscillators is described by a global interaction term but we introduce a phase-dependent coupling mechanism, such that oscillators interact only within a specific interval of the cycle corresponding to a specific stage of the circadian cycle.
  We analytically demonstrate that this modified system achieves complete asymptotic phase synchronisation, provided specific conditions on the initial phase distribution and the coupling window length are met. 
  To bridge this theoretical framework with experimental observations, we introduce a signal processing procedure based on wavelet decomposition to extract quantitative oscillatory features from {Per2}::luciferase reporter traces.
 We then calibrate the model against datasets from both wild type and {Cry2} knockout hepatocyte spheroids using a two-step quasi-Monte Carlo filtering algorithm. 
 The comparative analysis reveals significant phenotypic divergence, showing that the Cry2KO condition alters the identifiability landscape of the model's parameters and introduces new compensatory mechanisms that confound the initial population heterogeneity with long-term macroscopic signal decay.
\end{abstract}

\section{Introduction}
	
The circadian clock is a mechanism common to many organisms, from prokaryotes like cyanobacteria to  complex life forms including plants, insects, and vertebrates \cite{dunlap1999molecular}. 
	This mechanism drives an organism's adaptation to environmental changes resulting from the daily light/dark cycle.
	These periodic changes translate into circadian oscillations in most physiological and behavioural processes, with a period of approximately 24 hours.
	In mammals, these oscillations can be observed throughout the body at the organ, tissue, cellular, down to the molecular level \cite{takahashi2017transcriptional}. 
	For a proper physiological function, these clocks, operating at different levels, must maintain some degree of synchronisation.
	Poor coordination between them may alters physiological functions and is generally associated with metabolic disorders, inflammation, or malignant diseases \cite{bass2010circadian}. 
	Conversely, a correct synchronisation between clocks may ensure or restore global homeostasis.
	This inter-clock coordination is achieved in a hierarchical manner \cite{bass2010circadian} with a central clock, located in the suprachiasmatic nuclei (SCN) in the hypothalamus and composed of a small number of neurons oscillating in a robust manner \cite{welsh2010suprachiasmatic} which respond to environmental cues (zeitgebers) such as light and propagates a robust oscillating signal via neuronal and hormonal pathways to peripheral clocks within and outside the brain \cite{balsalobre2000multiple}.
	
	At the bottom of this hierarchical system, each cell exhibits a circadian rhythm, driven by molecular oscillations. 
	This cellular autonomous mechanism involves a complex gene regulatory network including multiple transcriptional and posttranslational feedback loops that interact to generate approximately 24h molecular oscillations \cite{takahashi2017transcriptional}. 
	These oscillations are generally described in terms of circadian time (CT), a normalised time unit that sets the phase of the circadian cycle independently of the time of day. 
	In particular two central regulators, complexes CLOCK:BMAL1 and PER:CRY oscillate in phase opposition, CLOCK:BMAL1 peaking in the middle of the subjective day (around CT8-CT10) in mouse peripheral organs, PER:CRY peaking in the middle of the subjective night (around CT18-CT20). 
	This phase opposition is a hallmark of the cellular clock system in mammals.
	More precisely, in \cite{takahashi2017transcriptional}, one can see that a complete cycle (CT0-CT24) can be decomposed into a succession of several qualitative stages (derepression, activation, transcription, repression and poised state).
	Several models of the regulatory network underlying the interactions between CLOCK:BMAL1, PER:CRY and other key regulators have been proposed, based on ordinary differential equations (ODE) \cite{goodwin1965oscillatory, almeida2020transcription} or Boolean networks \cite{diop2020qualitative}. 
	In \cite{diop2020qualitative, almeida2020transcription}, both the complexes CLOCK:BMAL1-PER:CRY phase opposition and the succession of qualitative stages have been captured.
	Those models remain at the scale of a single cell though, little is known at the scale of a cell population.
	
	While it is generally admitted that the coordination between cellular clocks is globally ensured by robust oscillations of internal synchronisers under the control of the central clock in a mono-directional way \cite{bass2010circadian}, recent studies have challenged this view and observed that circadian rhythm in liver cells (hepatocytes) can be maintained in a coherent and phase-organised manner in the absence of a functional SCN \cite{sinturel2021circadian}. 
	Therefore, the traditionally accepted one-directional hierarchy may require further refinement. 
	In particular, those observations suggest some degree of intercellular coupling between hepatocytes, independent of external cues. 
	Though the precise nature of this coupling remains unknown. 
	In a separate study  \cite{finger2021intercellular}, it has been proposed that intercellular coupling of molecular clocks may operate via a paracrine mechanism involving TGF-$\beta$ signalling. 
	TGF-$\beta$ is a secreted signalling molecule that seems to impact local circadian synchronisation, possibly regulating the expression of  the {\textit {Per2}} clock gene.
	Given that {\textit {Per2}} expression exhibits robust circadian oscillations \cite{takahashi2017transcriptional}, this observation suggests that intercellular coupling may be phase-dependent, being effective only during a specific stage of the circadian cycle.

	In this article, we propose to analyse local and phase-dependent intercellular coupling at the scale of a population of cells. 
	A {Kuramoto-like} model is used to represent circadian clocks in a cell population.
	In its discrete version, this model consists of a system of ordinary differential equations (ODEs) describing the dynamics of a population of coupled oscillators. 
	Each oscillator represents an oscillating cell, and is described by its phase indicating its progression through the cycle. 
	Hence, {the phase is interpreted as a continuous variable regulating} the production of some transcriptional regulators, and the intrinsic frequency represents the natural {speed of revolution} at which the oscillator would evolve in the absence of coupling.
	{In the seminal Kuramoto model \cite{kuramoto1984chemical},} oscillators interact with one another via a "global" coupling term (\textit{all-to-all} coupling where each oscillator is coupled to all the other oscillators at all time), which tends to reduce the phase differences between them. 
	This model is often used to analyse the synchronisation of an oscillators' population \cite{dorfler2011critical}. 
	For instance, in \cite{ermentrout1985synchronization}, it is shown that if the coupling strength passes a certain threshold, the population becomes phase-locked, meaning that all oscillators eventually evolve at the same frequency. 
	In \cite{ha2010complete}, authors show that when all oscillators share the same intrinsic frequency, their phases need only be sufficiently close at the start to achieve an asymptotically {synchronisation of the oscillators}.
	
	As a variant of the seminal Kuramoto model, we explore in the present {article} the possibility that intercellular communications may be phase-dependent. 
	This implies a looser coupling term, where the synchronising signal is only active during a specific interval of the cycle. 
	In a first step, we construct a Kuramoto-like model based on the biological cues cited above which formalises this phase-dependent coupling.
	We show that this relaxation of the Kuramoto model still generates asymptotic synchronisation (Theorem~\ref{thm:sync}).
	
	In a second step, a signal processing procedure is proposed to characterise quantitatively the oscillatory features observed in the experimental data obtained from {Per2}::luciferase reporter traces. 
	The code is available at \url{https://github.com/AnastasiaMARECHAL/InSyncPy}. 
	Then, the model is calibrated against experimental data.
	To this end, a reporter model is introduced, linking the theoretical model to experimental signals. 
A good agreement between the model and the data is achieved, and a range of values for the width of the coupling interval is identified, thereby supporting both the hypothesis of a coupling being active in a precise phase interval and the proposed model.
Finally, a comparative study of the calibration results between two experimental settings is performed leading to a better understanding of the dependencies between the coupling mechanisms and the reporter model as well as the phenotypic divergence and compensation mechanisms between the two settings.

\section{Modelling a network of interacting circadian clocks by a phase-local coupling in the Kuramoto model}

We consider a phase description of the dynamics of the circadian rhythm of a cell population.
The Kuramoto model was introduced in 1975 to model a network of coupled oscillators \cite{kuramoto1975international}.
The dynamics of the network is given by the following system on the oscillators' phases:
\begin{equation}
		\frac{d}{dt}\theta_i(t) = \Omega_i+\frac{K}{N}\sum\limits_{k = 1}^N\sin(\theta_k(t)-\theta_i(t)), \quad \text{for } t>0,\; i=1,\hdots, N,
	\label{eq:kuramoto}
\end{equation}
where $K>0$ is the coupling strength parameter and $N$ the total number of oscillators. 
The last term on the right-hand side is a global coupling term, each sinusoidal term in the sum drawing the phase $\theta_i$ closer to the other phases.
The coupling is thus all-to-all at any time $t$.
In some formulations \cite{dorfler2011critical}, variables $\theta_i$ are defined as angular variables on the torus $\mathds{R}_+/2\pi\mathds{Z}$. 
In the following, the system is formulated in $\left(\mathds{R}_+\right)^N$ to ease the presentation.
System \eqref{eq:kuramoto} leads to complete phase synchronisation under relatively large hypothesis, as described in \cite{ha2010complete}.
More precisely, it is sufficient that the phase distribution of the oscillators is not too widespread at initial time ($\max_{i,j} \vert\theta_i(0)-\theta_j(0)\vert < \pi$) in order to obtain an exponential speed of synchronisation of the oscillators for system \eqref{eq:kuramoto} in the case identical oscillators ($\Omega_i = \Omega$).

In the following, a population of $N$ oscillators is considered to model the time evolution of cellular circadian clocks in a population of hepatocytes.
The phase variable $\theta_i(t)$ represents the progression of a hepatocyte indexed by $i$ in the circadian cycle.
Furthermore, the population of cells is assumed identical (i.e. their intrinsic frequencies are equal), implying that, without intercellular coupling, each cell completes a full revolution of the circadian cycle in the same amount of time.
For the sake of clarity, we set the intrinsic frequency to $\Omega_i = 1$ for $i=1,\hdots,N$.

In the seminal Kuramoto model \eqref{eq:kuramoto}, the \textit{all-to-all} coupling is a strong hypothesis that does not seem to hold true in the case of interconnection of circadian clocks. 
Indeed, the cellular circadian clock is based on the succession of several qualitative stages, associated with specific transcriptional \cite{takahashi2017transcriptional} and metabolic \cite{bass2010circadian} mechanisms.
However, a consensus has been reached that it is more likely that cellular clocks are coupled during one or a fraction of the stages of the cycle, rather than for the entire duration of the cycle.
In particular, Finger et al. \cite{finger2021intercellular} proposed that paracrine signalling mechanisms could contribute to intercellular coordination. 
This signalling is known to influence the regulation of clock genes, including the expression of Per2. 
Per2 expression exhibiting circadian oscillations (see \textit{e.g.}, \cite{takahashi2017transcriptional}), it is reasonable to assume that cells adapt to incoming signals only during specific stages of the cycle.
Hence, we make the assumption that cells are susceptible to receive a synchronising message only during a specific phase interval.

By denoting $L>0$ the duration of a full revolution, the time evolution of the phases of a network of oscillators modelling the circadian rhythm of hepatocytes is given by:
\begin{equation}
	\left\{
	\begin{array}{ll}
		\dot\theta_i(t) &= 1 + \frac{K}{N} \varphi(\theta_i(t))\sum\limits_{k=1}^N \sin\left(\left(\theta_k(t)-\theta_i(t)\right)\frac{2\pi}{L}\right), \quad i=1,\hdots, N, \\
		\theta_i(0) &= \theta_i^0\in (0,L), \quad i=1,\hdots, N,
	\end{array}
	\right.
	\label{eq:adaptive_kura}
\end{equation}
where $\theta_i^0$ is the initial phase of the oscillator $i$ and $\varphi$ designates a $L$-periodic, {compactly supported on the restriction $[0, L]$,} coupling function.
Here, $\varphi$ modulates the coupling strength of the oscillator depending on whether or not its phase is within the correct interval.
Indeed, we assume that $\varphi$ has compact support in the phase domain and
we denote by $S\subsetneq[0,L]$ its support
such that  
\begin{equation}
	S\supset I_\theta \defeq 
	[\theta_{\min};\theta_{\max}], 
	\quad \text{and} \quad \Delta_\theta \defeq \theta_{\max}-\theta_{\min}.
	\label{eq:delta_theta}
\end{equation}
where $0<\theta_{\min}<\theta_{\max}<L$ are two given values (in CT), $I_\theta$ is the phase interval with non-negligible coupling strength and $\Delta_\theta$ is the length of this interval.
For instance, if we consider the signal to be mediated through the direct modulation of Per2 expression, one may choose $\theta_{\min}=13$ or $14$ (in CT) and $\theta_{\max}=20$ or $21$ (in CT, see \cite{takahashi2017transcriptional}).
Moreover, we make several assumptions on the coupling function $\varphi$ in order to ensure the following:
\begin{itemize}
	\item the coupling operates during the stages associated to the window $I_\theta$,
	\item the coupling function is bounded, i.e. the intercellular coupling is seen as a perturbation on the speed at which the cell completes one revolution of its circadian cycle. 
\end{itemize}
The following hypothesis formalises all these assumptions.
\begin{Hyp}[Phase-local coupling] Let $\varphi\in\mathcal{C}^1(\mathds{R}_+)$ be a $L$--periodic function $(\varphi(x+L)=\varphi(x))$ and $\text{supp}(\varphi)= S/ (L\mathds{N}) \subsetneq \mathds{R}^+$.
Moreover, let
\begin{itemize}
	\item  $\varphi$ be $L_\varphi$--Lipschitz continuous, $L_\varphi >0,$ $|\varphi '(x)|\leq L_\varphi $  and $0\leq \varphi(x)\leq 1, \quad \forall x\in \mathds{R}_+,$
	\item $\exists C_\varphi>0, \quad 0<C_\varphi\leq \varphi(x)\leq1$ for $x\in\mathring{I}_\theta/(L\mathds{N})$ (cf. \eqref{eq:delta_theta}).
\end{itemize}
\label{hyp:phi}
\end{Hyp}
\noindent As an illustration, the following figure gives an admissible shape for the coupling function  $\varphi$.
\noindent
\begin{minipage}{0.495\textwidth}

	\begin{tikzpicture}
		\begin{axis}[
			width=\textwidth,
			height=0.7\textwidth,
			xlabel={$\theta$},
			domain=0:4.1,
			samples=300,
			axis lines=left,
			ymin=0, ymax=1.1,
			xmin=0, xmax=4.1,
			clip=false,
    		xtick={0,1.23,2.77,4},
    		xticklabels={$0$,$\theta_{\min}$,$\theta_{\max}$,$L$},
    		ytick={0,0.2,1},
    		yticklabels={$0$,$C_\varphi$,$1$}
			]
			
			
			
			\addplot[thick, blue]
			{abs(x-2)<1 ? exp(1)*exp(-1/(1-(x-2)^2)) : 0};
			
			\node at (axis cs:3.5,0.1) {$\varphi(\theta)$};
			
			\draw[<->, thick, red]
			(axis cs:1.23,0) --
			(axis cs:2.77,0)
			node[midway, below] {$\Delta\theta$};
			
			\draw[dashed]
    		(axis cs:0,0.2) --
    		(axis cs:2.77,0.2);
    		
 			\draw[dashed]
    		(axis cs:2.77,0) --
    		(axis cs:2.77,0.2);   
    		
    		\draw[dashed]
    		(axis cs:1.23,0) --
    		(axis cs:1.23,0.2);
    			
		\end{axis}
	\end{tikzpicture}
\end{minipage}
\begin{minipage}{0.495\textwidth}
The function $\varphi$ is bounded by below and above on $I_\theta$.
On $[0,L]\setminus S$, $\varphi = 0$ which implies no coupling between the oscillators.
In these regions, the oscillator $\theta_i$ evolves freely following its intrinsic frequency.
On the interval $S\setminus I_\theta $, there is a small coupling strength since $C_\varphi>\varphi\geq 0$ and we consider it negligible.
\end{minipage}

System (\ref{eq:adaptive_kura}) can be seen as a specific \emph{adaptive Kuramoto model}, where parameter $K$ is generalised as a function of the phase $K(\theta)$. 
The generalisation through the adaptative Kuramoto model was initially introduced to formalise processes where interaction strengths are not fixed parameters but evolve in time due to learning, plasticity, or feedback from the oscillators' states (cf. \cite{ha2016synchronization, hoppensteadt2000pattern, seliger2002plasticity}). 
The {adaptive Kuramoto model} is particularly well suited for studying a network of coupled circadian clocks, since intercellular coupling can depend on physiological state (the stages of the circadian clock), signalling efficacy, and regulatory feedback \cite{finger2021intercellular}. 
Hence, for circadian cell networks, adaptive coupling offers a more realistic representation of how the tissue-level rhythm emerges from interacting cellular oscillators, at the expense of introducing additional non-linearities in the phase dynamics and increasing the analysis and computational costs.

To our knowledge, one of the original motivation behind the introduction of adaptive Kuramoto model was to obtain a coupling strength that depends of the location of the phases and more specifically their distance between each other (cf. \cite{ha2016synchronization, seliger2002plasticity}).
This coupling term increases more when the two oscillators are near to be phase synchronised.
In the present study, this is not the case. 
The phase dependent coupling term is kept at 0 outside a specific interval, allowing the coupling to be active only momentarily.
Here, a structure for the function $K(\theta)$ is imposed, through the function $\varphi$, directly inspired from biological considerations.
We make an additional hypothesis to ensure monotonicity for the progression in the circadian cycle. 
\begin{Hyp}[Coupling strength] Let the parameter $K$ associated to the coupling strength between oscillators in the network such that
	$K \in (0,1). $
	\label{hyp:K}
\end{Hyp}
\noindent Under this hypothesis, and given that $\varphi\leq 1$, the following inequality holds $$\frac{K}{N}\varphi(\theta_i)\sum_{k=1}^N\sin(\theta_k-\theta_i)> -1, \quad i \in\{1, ..., N\},$$
implying
$$\frac{d}{dt} \theta_i(t) > 0, \quad\forall \forall i = 1, \hdots, N,\, t\in\mathds{R}_+. $$ 
\noindent Hence, Hypotheses \ref{hyp:phi} and \ref{hyp:K} ensure that the phases are always increasing.\\

\section{Asymptotic phase synchronisation of the model}
For the sake of clarity and without loss of generality, we fix the value of the period $L = 2\pi$ in this section.
Hence, we consider the following system :

\begin{equation}
	\left\{
	\begin{array}{rl}
		\frac{d}{dt}\theta_i(t) &= 1 + \frac{K}{N}\varphi(\theta_i(t)) \sum_{j=1}^N \sin\left(\theta_j(t)-\theta_i(t)\right),\quad i=1,\hdots,N,\\
		\theta_i(0) &= \theta_i^0\in (0,2\pi),\quad i=1,\hdots,N.
	\end{array}
	\right.
	\label{eq:math_model}
\end{equation}

As a first step, preliminary properties of the solution, including well-posedness, are established. 
An order parameter (phase diameter) is then introduced and shown to be non-increasing in time. 
Finally, we prove that system \eqref{eq:math_model} achieves complete phase synchronisation asymptotically. 
Compared to the seminal Kuramoto model, the coupling term is weaker but the speed of convergence toward the synchronised state is also slower.

\subsection{Properties of the oscillators' phases}
In the following, we prove in Proposition \ref{prop:no_crossing} that the solution of system \eqref{eq:math_model} is well-posed, and that the phase trajectories do not intersect with one another in finite time. 
Then, we establish a monotony property (Lemma \ref{le:div_phase}) stating that under the hypotheses \ref{hyp:phi} and \ref{hyp:K}, the phase of each oscillator is strictly increasing.

\begin{prpstn}[Well-posedness] Suppose Hypotheses \ref{hyp:phi} and \ref{hyp:K} hold.
Consider $\Theta = \left(\theta_i\right)_{i=1,\hdots,N}$ and $\Theta^0 = \left(\theta_i^0\right)_{i=1,\hdots,N}$ such that 
$$\theta_i^0 \neq \theta_j^0, \quad \text{for } i \neq j. $$
Then there exists a unique global solution for system \eqref{eq:math_model} on $\mathds{R}_+$, with $\theta_i \in \mathcal{C}^1(\mathds{R}_+)$ for all $i=1,\dots,N$.
Moreover the following property holds:
$$ \theta_i(t) \neq \theta_j(t), \quad \text{for } i \neq j,\,i,j =1,\hdots,N  ,\, \forall t > 0. $$
\label{prop:no_crossing}
\end{prpstn}
\begin{proof}{\textit{(Proposition \ref{prop:no_crossing})}}
The well-posedness of system \eqref{eq:math_model} is a direct consequence of the Cauchy–Lipschitz theorem, since the vector field of the system is globally Lipschitz continuous under Hypothesis \ref{hyp:phi}.
Moreover, let $\Theta$ be a solution of system \eqref{eq:math_model}, with initial conditions satisfying $\theta_i^0\in (0,2\pi)$ for all $i=1,\dots,N$ and 
$$\theta_i^0 \neq \theta_j^0, \quad \text{for } i \neq j.$$
For any pair $(i,j) \in \{1,\dots,N\}^2$ with $i \neq j$, we denote the phase difference as
$$\phi_{i,j}(t) \defeq \theta_i(t) - \theta_j(t).$$
Hence, by assumption, $|\phi_{i,j}(0)| > 0$ for all $i\neq j$.
Then, for all $t>0$,
\begin{align*}
\frac{d}{dt}|\phi_{i,j}| =& sign(\phi_{i,j})\frac{d}{dt}\phi_{i,j},\\
=& sign(\phi_{i,j})\frac{K}{N}\sum_{k = 1}^N\varphi(\theta_i)\sin(\theta_k-\theta_i)
- \varphi(\theta_j)\sin(\theta_k - \theta_j),\\
=& sign(\phi_{i,j})\frac{K}{N}\sum_{k=1}^N(\varphi(\theta_i) - \varphi(\theta_j))\sin(\theta_k - \theta_i)
+ \varphi(\theta_j)(\sin(\theta_k - \theta_i) - \sin(\theta_k - \theta_j)).\\
\end{align*}
Thanks to Hypothesis \ref{hyp:phi}:
$$ - L_\varphi|\phi_{i,j}|\leq \varphi(\theta_i) - \varphi(\theta_j)\leq L_\varphi|\phi_{i,j}|,$$
and since the sine function is $1$-Lispchitzian:
$$-|\phi_{i,j}|\leq \sin(\theta_k - \theta_i) - \sin(\theta_k-\theta_j)\leq |\phi_{i,j}|.$$
Thus, 
$$ \frac{d}{dt}|\phi_{i,j}|\geq  - K(L_\varphi + 1)|\phi_{i,j}|.$$
Using the Grönwall's inequality, we obtain for all $t>0$
$$|\phi_{i,j}(t)|\geq |\phi_{i,j}(0)|e^{- K(L_\varphi + 1)t}.$$
Then $|\phi_{i,j}(t)|>0$ for all $t>0$.
\end{proof}

Proposition \ref{prop:no_crossing} states that the trajectories never intersect. 
This suggests that complete synchronisation can only be achieved asymptotically under a widespread phase's initial distribution.
Another useful characteristic of the system in order to prove the asymptotic phase synchronisation is the monotony of the oscillators' phases.

\begin{lmm}
	Let Hypotheses \ref{hyp:phi} and \ref{hyp:K} hold and let $\{\theta_i\}_{i  =1}^N$ be the global solution of \eqref{eq:math_model}. Then $\theta_i$ is strictly increasing and diverge to $+\infty$ as $t\to\infty$ for all $i\in\{1, ..., N\}$.
	\label{le:div_phase}
\end{lmm}
\begin{proof}{\textit{(Lemma \ref{le:div_phase})}}
	Let $t>0$, 
	$$\frac{d}{dt}\theta_i (t) = 1 + \varphi(\theta_i(t))\frac{K}{N}\sum_{k = 1}^N\sin(\theta_k(t) - \theta_i(t))\geq 1 - K > 0$$
	because $K<1$ (c.f. Hyp \ref{hyp:K}) and $\varphi(x)\geq 0$ (c.f. Hyp \ref{hyp:phi}). 
	Thus, each $\theta_i$ is strictly increasing.
	Also, for $i\in\{1, ..., N\}$ 
	$$\theta_i(t) - \theta_i^0 \geq (1 - K)t.$$
	Since $\theta_i$ is continuous on $\mathds{R}^+$, and $1 - K>0$, by comparison, $\lim_{t\to\infty}\theta_i(t)=+\infty$ for all $i\in\{1, ..., N\}$.
\end{proof}	

\subsection{The phase diameter as a synchronisation's order parameter}
Since the trajectories never intersect (cf. Prop. \ref{prop:no_crossing}), extremal phases remain the same over time, allowing the definition of the phase diameter as a difference between extremal phases (see Eq. \eqref{eq:phase_diameter} below).
This specific diameter was previously introduced in \cite{ha2010complete} to obtain phase and frequency synchronisation estimates in the seminal Kuramoto model. 
We use this definition to extend the study of synchronisation to the modified Kuramoto model with the "phase-local" coupling, which is better aligned with the biological context being modeled.\\
Let $m\in\{1, \hdots, N\}$ be the index such that 
$$\theta_m^0 \defeq \min_{1 \leq i \leq N} \theta_i^0$$
 and $M\in\{1, \hdots, N\}$ be the index such that $$\theta_M^0 \defeq \max_{1 \leq i \leq N} \theta_i^0.$$
Under the assumption of Proposition \ref{prop:no_crossing}
\begin{equation}
\theta_m(t) = \min_{1 \leq i \leq N} \theta_i(t)\qquad \mathrm{and} \qquad\theta_M(t) = \max_{1 \leq i \leq N} \theta_i(t), \text{ for all } t>0.
\label{eq:thetamM}
\end{equation}
To quantify the distribution of the phases of the oscillators, we introduce the phase diameter as follows.
\begin{dfntn}[Phase diameter]
We denote the phase diameter as
\begin{equation}
D_\theta(t) \defeq \theta_M(t) - \theta_m(t), \quad \forall t>0,
\label{eq:phase_diameter}
\end{equation}
and we denote by $D^0$ the initial phase diameter 
$$D^0 \defeq \theta_M^0 - \theta_m^0. $$ 
\end{dfntn}
\noindent Now, the synchronisation of the system \eqref{eq:math_model} can be characterised as follows.
\begin{dfntn}[Phase synchronisation]
\label{def:sync}
We consider that system \eqref{eq:math_model} tends toward phase synchronisation if
$$\lim_{t\to\infty}|\theta_i(t) - \theta_k(t)| = 0, \quad \forall i, k\in \{1, \dots, N\}.$$
Or, equivalently, system \eqref{eq:math_model} is synchronising over time if the phase diameter tends to zero, i.e.,
$$\lim_{t\to\infty}D_\theta(t) = 0.$$
\end{dfntn}
\noindent Definition \ref{def:sync} states that the system is synchronising if the phases' distribution concentrates towards a Dirac mass asymptotically.
A monotony property of the phase diameter \eqref{eq:phase_diameter} is obtained subsequently.
It allows to obtain uniform bounds of the phase diameter.

\begin{prpstn}[Non-increasing phase diameter]
Suppose Hypotheses \ref{hyp:phi} and \ref{hyp:K} hold and let $\left(\theta_i\right)_{i  =1}^N$ be the global solution of \eqref{eq:math_model} with initial size distribution satisfying 
$D^0<\pi.$
Then $D_\theta$ is nonincreasing and subsequently
$$0\leq D_\theta(t)\leq D^0, \quad  \forall t>0.$$ 
\label{prop:phase_decay}
\end{prpstn}
\begin{proof}{\textit{(Proposition \ref{prop:phase_decay})}}
Let $t^*>0$ such that 
$$t^* = \inf\{t\geq 0, D_\theta(t) = \pi\}.$$ 
Since $D^0<\pi$ and $D_\theta$ continuous, this implies $D_\theta(t)<\pi$ for all $t\in[0, t^*)$.
Let $t\in[0, t^*)$, 
\begin{align*}
	\dt D_\theta(t) &= \dt \theta_M(t) - \dt \theta_m(t),\\
	& = \frac{K}{N}\sum_{k=1}^N\big[\varphi(\theta_M(t))\sin(\theta_k(t)-\theta_M(t))-\varphi(\theta_m(t))\sin(\theta_k(t) - \theta_m(t))\big].
\end{align*}
We have $$\sin(\theta_k(t)-\theta_M(t))\leq 0$$
because $-\pi< -D_\theta(t)\leq \theta_k(t) - \theta_M(t)\leq 0$. 
Similarly: $$\sin(\theta_k(t)-\theta_m(t))\geq 0.$$ 
Since $\varphi\geq 0$ (cf. Hyp. \ref{hyp:phi}), we obtain
$$\dt D_\theta(t)\leq 0,\quad  \forall t\in[0,t^*)$$
and then $$D_\theta(t)\leq D^0<\pi, \quad  \forall t\in[0,t^*).$$
Now, suppose $0<t^*<\infty$, then
$$\pi = D_\theta(t^*) = \lim_{t \to t^* \atop t < t^*}D_\theta(t).$$
This leads to the following contradiction:
$$\pi = D_\theta(t^*) = \lim_{t \to t^* \atop t < t^*}D_\theta(t)\leq D^0<\pi.$$
Hence, 
it implies that $D_\theta$ is nonincreasing and bounded by  $$0\leq D_\theta(t)\leq D^0<\pi,\quad \forall t\geq 0.$$
\end{proof}

\subsection{Asymptotic phase synchronisation}

Now, we look at the evolution of the phase diameter on the initial cycle (i.e. the first revolution of the oscillators) and iterate its study across cycles.
We show that, under the assumption $D^0<\Delta_\theta$, there is a non-empty time interval during which all oscillators are within the coupling interval, and using arguments similar to those introduced in \cite{ha2010complete}, an exponential decay of the diameter on this interval is obtained.
Then, we show that the size of this time interval is uniformly lower-bounded across cycles, which allows us to obtain a geometric decay of the diameter across cycles and thus to conclude that system \eqref{eq:math_model} tends to a phase synchronised state.

\subsubsection{Contraction of the phase diameter on the first cycle}

The first step is to quantify the decay of the phase diameter $D_\theta$ during the initial cycle, as well as the time window during which all oscillators are in the coupling interval {$I_\theta$} (see \eqref{eq:delta_theta}).
We define the two following characteristic features: 
\begin{itemize}
    \item the first time when all oscillators are coupled during the initial cycle denoted by $T_0$ with $\theta_m(T_0)= \theta_{min},$
    \item the time span during the initial cycle when all oscillators are coupled denoted by $\delta_0$ with $\theta_M(T_0 + \delta_0)= \theta_{max}$.
\end{itemize}
Using these two characteristic features, we obtain the following decay property on the phase diameter after the first revolution of the cycle.

\begin{prpstn}
	Suppose Hypotheses \ref{hyp:phi} and \ref{hyp:K} hold and let $\{\theta_i\}_{i  =1}^N$ be the global solution of \eqref{eq:math_model} with initial phase distribution satisfying $D^0<\min(\pi, \Delta_\theta)$.
	Suppose that $\theta_m^0\leq \theta_{\min}$.
	Using notation introduced in \eqref{eq:thetamM}, let $T_1$ and $\delta_1$ designate respectively the time and the time span such that $$\theta_m(T_1) = \theta_m(T_0) + 2\pi \quad \mathrm{and}\quad \theta_M(T_1+\delta_1) = \theta_M(T_0 + \delta_0)+ 2\pi.$$
	Then, we have for $t\in [T_0, T_0 + \delta_0]$
	$$D_\theta(T_1)\leq D_\theta(T_0 +\delta_0)\leq D_\theta(T_0)e^{-\alpha K{C_\varphi}(t - T_0)}, \quad\mathrm{with}\quad \alpha \defeq \frac{\sin(D^0)}{D^0}>0.$$
	\label{prop:cycle_0}
\end{prpstn}

\begin{rmrk}
	In Proposition \ref{prop:cycle_0}, we suppose that $\theta_m^0 < \theta_{min} $, i.e. the initial phase distribution support is not included in the coupling phase interval. 
Otherwise, the same result can be obtained by shifting to the second cycle and redefining $T_i, \; \delta_i $, $i=0,\; 1$ accordingly.
This assumption does not affect the decay estimate on the first cycle. 
	Moreover, one can note that on the interval $[T_0, T_0 +\delta_0]$, all phases are between $\theta_{\min}$ and $\theta_{\max}$. Hence, similar arguments as in \cite{ha2010complete} can be used to obtain the exponential decay of the phase diameter $D_\theta$.
\end{rmrk}

\begin{proof}{\textit{(Proposition \ref{prop:cycle_0})}}
The proof consists of two steps. First, we verify that all oscillators remain in the coupling interval during a positive time interval. 
Then, an exponential decay estimate for the diameter is derived.\\
\noindent\textit{Step 1: Non trivial coupling time span.}\\
\noindent Since $\theta_m(t)$ is strictly increasing with respect to time and tends to infinity (c.f. Lemma \ref{le:div_phase}), $T_0$ and $T_1$ are uniquely defined and $T_0<T_1$.
Since $D^0<\Delta_\theta$, $\theta_m(T_0) = \theta_{\min}$ and $\sin(\theta_k(t) - \theta_M(t))\leq 0$ for $t\geq 0$, we have $\theta_M(T_0)< \theta_{\max}.$\\
Hence, the time span $\delta_0$ such that $\theta_M(T_0 + \delta_0)= \theta_{\max}$ is such that $\delta_0>0$.
	Also, $\theta_M$ is increasing and $\lim_{t\to\infty}\theta_M = +\infty$ (c.f. Lemma \ref{le:div_phase}), so $\delta_0$ is unique.
	Finally, since $\theta_m(t)<\theta_M(t)$ for all $t>0$ and $$\theta_M(T_0+\delta_0) = \theta_{\max}<\theta_{\min}+2\pi = \theta_m(T_1)$$ then, by continuity and strict monotony, $T_1 > T_0+\delta_0$.\\
\noindent\textit{Step 2: Exponential decay of $D_\theta$ on $[T_0, T_0 +\delta_0]$.}\\
	For $t<T_0$, the function $\varphi$ is null or arbitrary small ($\varphi(x)<C_\varphi \ll 1$ for $x\in [0;2\pi]\setminus I_\theta$) hence the phase diameter stays almost constant. 
	Let $t\in[T_0,\ T_0+\delta_0]$,  since $\theta_M(t), \; \theta_m(t) \in [\theta_{\min}, \theta_{\max}]$  and {$\varphi(\theta_m(t))\geq C_\varphi,$ $\varphi(\theta_M(t)) \geq C_\varphi$ (cf. Hyp. \ref{hyp:phi}), }
	we get that
	\begin{align*}
		\frac{d}{dt}D_\theta(t) &= \frac{K}{N}\varphi(\theta_M(t))\sum_{k=1}^N\sin(\theta_k(t)-\theta_M(t)) - \frac{K}{N}\varphi(\theta_m(t))\sum_{k=1}^N\sin(\theta_k(t)-\theta_m(t)), \\
		&{\leq \frac{KC_\varphi}{N}}\sum_{k=1}^N\sin(\theta_k(t)-\theta_M(t)) - \sin(\theta_k(t)-\theta_m(t)).
	\end{align*}
	Now, $\theta_k(t) - \theta_M(t)\in[-\pi, 0]$ for all $t\geq 0$ and $k\in\{0, \hdots, N\}$ since $\theta_M(t)\geq \theta_k(t)$, thanks to Proposition \ref{prop:phase_decay}, we get 
	$$|\theta_k(t)-\theta_M(t)|\leq D_\theta(t)<D_0.$$  
Since the sine function is concave on $[-\pi, 0]$, we obtain for all $k\in\{1, ..., N\}$,
$$\sin(\theta_k(t) - \theta_M(t))\leq \frac{\sin(D^0)}{D^0}(\theta_k(t) - \theta_M(t)).$$
Similarly, $\theta_k(t) - \theta_m(t)\in[0, \pi]$ for all $t\geq 0$ and the sine function is convex on $[0, \pi]$, then, for all $k\in\{1, ..., N\}$,
	 $$\sin(\theta_k(t) - \theta_m(t))\geq \frac{\sin(D^0)}{D^0}(\theta_k(t) - \theta_m(t)).$$
Denoting $\alpha = \frac{\sin(D_0)}{D_0}$, we have
	\begin{align*}
		\frac{d}{dt}D_\theta(t) &\leq \frac{KC_\varphi}{N}\sum_{k=1}^N\sin(\theta_k(t)-\theta_M(t)) - \sin(\theta_k(t)-\theta_m(t)),\\
		&\leq \alpha\frac{KC_\varphi}{N}\sum_{k=1}^N(\theta_k(t)-\theta_M(t)-\theta_k(t)+\theta_m(t)),\\
		&\leq -\alpha KC_\varphi(\theta_M(t)-\theta_m(t)) = -\alpha KC_\varphi D_\theta(t).
	\end{align*}
	Using Grönwall’s lemma, we obtain for $t\in [T_0, T_0 + \delta_0]$ that 
	$$D_\theta(t)\leq D_\theta(T_0)e^{-\alpha K{C_\varphi} (t-T_0)}.$$
	Since $D_\theta$ is nonincreasing (c.f. Lemma \ref{prop:phase_decay}) and $T_0<T_1$, we have:
	$$D_\theta(T_1)\leq D_\theta(T_0 + \delta_0)\leq D_\theta(T_0)e^{-\alpha K{C_\varphi}(t-T_0)}, \quad \text{for } T_0\leq t\leq T_0 + \delta_0.$$
\end{proof}

\subsubsection{Phase diameter decay and iterative contractions across cycles}

Now, we show that $D_\theta$ converges to 0, which leads the system to asymptotic phase synchronisation.
First, we define iteratively $T_n$ and $\delta_n$, the time when all oscillators are in the coupling interval during the $n+1$ cycle and the time span during which all oscillators are coupled respectively, 
\begin{equation}
\begin{array}{l}
	\theta_m(T_n)\defeq \theta_m(T_{n-1})+ 2\pi,\quad n\in \mathds{N}^*,\\ 
	\theta_M(T_n + \delta_n)  \defeq \theta_M(T_{n-1}+ \delta_{n-1}) + 2\pi,\quad n\in \mathds{N}^*.
\end{array}
	\label{eq:tehtamM_n}
\end{equation}

\begin{prpstn}
	Suppose Hypotheses \ref{hyp:phi} and \ref{hyp:K} hold and let $\{\theta_i\}_{i  =1}^N$ be the global solution of \eqref{eq:math_model} with initial phase distribution satisfying $D^0<\min(\pi, \Delta_\theta)$.
	Suppose that $\theta_m^0\leq \theta_{\min}$.
	Then, 
	using the notation introduced in \eqref{eq:tehtamM_n},  we have
	$$D_\theta(T_{n+1})\leq D_\theta(T_n)e^{-\alpha K{C_\varphi}(t - T_n)},\quad \forall  n\in\mathds{N}^*,\quad \forall t\in[T_n, T_n+\delta_n].$$
	\label{prop:cycle_n}
\end{prpstn}

\noindent The proof of the contraction of the phase diameter stated in Proposition \ref{prop:cycle_n} uses the same arguments as in the proof of Proposition \ref{prop:cycle_0}.
We propose to give a sketch of the proof. First, we use the monotony of each $\{\theta_i\}_{i =1}^N$ to prove that the sequences $(T_n)_{n\in\mathds{N}^*}$ and $(\delta_n)_{n\in\mathds{N}^*}$ are well defined and that $T_{n+1}>T_n+\delta_n\geq T_n$. 
Moreover, since the initial phase diameter verifies $D^0<\Delta_\theta$ and the phase diameter $D_\theta$ is non-increasing, then $\delta_n>0$ for all $n\in\mathds{N}^*$. 
Hence, the intervals $[T_n, T_n + \delta_n]$ are non-empty for all $n\in\mathds{N}^*$. 
The exponential decay of the phase diameter is finally obtained by applying the same reasoning as in the proof of the Proposition \ref{prop:cycle_0}, switching $T_0$ by $T_n$, $T_1$ by $T_{n+1}$ and $\delta_0$ by $\delta_n$. 
The result holds for all $n\in\mathds{N}^*$.\\
Proposition \ref{prop:cycle_0} gives an exponential decay during a specific time interval of length $\delta_0$ and Proposition \ref{prop:cycle_n} extends this result to intervals of length $\delta_n$ for every $n\in\mathds{N}^*$.
The following lemma gives a uniform lower bound of this time span $\delta_n$.

\begin{lmm}
	\label{lemma:borne_delta}
	Let the same assumptions as in Proposition \ref{prop:cycle_0} hold, then 
	$$\delta_n\geq \delta>0, \quad \forall n\in\mathds{N}, $$ 
	where $\delta = \frac{\Delta_\theta - D^0}{1+K}.$
\end{lmm}

\begin{proof}{\textit{(Lemma \ref{lemma:borne_delta}})}
	Let $n\in\mathds N$.
	For $t\geq 0$, using Hypothesis \ref{hyp:phi} and the uniform upper bound on the sine function, we have that 
	$$\frac{d}{dt}\theta_M(t)  =  1 + \varphi(\theta_M(t))\frac{K}{N}\sum_{k=1}^N\sin\left(\theta_k(t)-\theta_M(t)\right)\leq 1+K.$$
	Then, for $t\in [T_n, T_n+\delta_n]$, we introduce the comparison function 
	\begin{equation}
		g(t)=\theta_M(T_n)+(1+K)(t - T_n),
		\label{eq:comparison_fct}
	\end{equation} which satisfies $g(T_n) = \theta_M(T_n)$ and $\theta_M(t)\leq g(t)$.
	
	Now, we look at the time span $t_n$ such that $g(T_n+t_n)=\theta_{\max}+2\pi n$. 
	Since $\dt \theta_M(t) \leq \dt g(t)$, we have  
	$\delta_n\geq t_n.$
	With Eq. \eqref{eq:comparison_fct}
	$$t_n = \frac{\theta_{\max}+2\pi n-\theta_M(T_n)}{1+K}.$$
	Since $\theta_m(T_n) = \theta_{\min}+2\pi n$ and $D_\theta(T_n)\leq D^0$  (c.f. Lemma \ref{prop:phase_decay}), we obtain that
	\begin{align*}
		\theta_M(T_n) &= \theta_m(T_n)+D_\theta(T_n)\leq \theta_{\min} + 2\pi n + D^0.\\
	\end{align*}
	Therefore
	$$\delta_n\geq t_n \geq \delta \defeq \frac{\Delta_\theta - D^0}{1+K}>0.$$
\end{proof}

Finally, the following Theorem establishes the result on the convergence toward a synchronised state for the modified Kuramoto model.
\begin{thrm}
	Suppose Hypotheses \ref{hyp:phi} and \ref{hyp:K} hold and let $\{\theta_i\}_{i  =1}^N$ be the global solution of \eqref{eq:math_model} with initial phase distribution satisfying $D^0<\min(\pi, \Delta_\theta)$.
	Suppose that $\theta_m^0\leq \theta_{\min}$.
	Then, we obtain
	$$D_\theta(T_{n+1}) \leq D_\theta(T_n) e^{-\alpha K {C_\varphi}\delta}, \qquad \forall n\in\mathds{N}.$$
	In particular, the sequence $(D_\theta(T_n))_{n\in\mathds{N}}$ decays to 0 faster than a geometric sequence with ratio $ e^{-\alpha K {C_\varphi}\delta}$.
	\label{thm:sync}
\end{thrm}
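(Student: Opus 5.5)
The plan is to combine the per-cycle contraction of Propositions~\ref{prop:cycle_0} and~\ref{prop:cycle_n} with the uniform lower bound on the coupling window from Lemma~\ref{lemma:borne_delta}. Fix $n\in\mathds{N}$. The case $n=0$ is covered by Proposition~\ref{prop:cycle_0} and the case $n\geq 1$ by Proposition~\ref{prop:cycle_n}. Either way, for every $t\in[T_n,T_n+\delta_n]$ we have
$$D_\theta(T_{n+1})\leq D_\theta(T_n+\delta_n)\leq D_\theta(T_n)e^{-\alpha K C_\varphi (t-T_n)},$$
with the same constant $\alpha=\sin(D^0)/D^0$. The constant does not depend on $n$ because, by Proposition~\ref{prop:phase_decay}, $D_\theta(t)\leq D^0<\pi$ for all $t$. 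This bound is what justifies the chord estimates for the sine in every cycle.

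The key step is to choose $t=T_n+\delta$. By Lemma~\ref{lemma:borne_delta}, $\delta\leq\delta_n$, so this $t$ is admissible. Substituting gives $D_\theta(T_{n+1})\leq D_\theta(T_n)e^{-\alpha K C_\varphi\delta}$, which is the first claim. Because $D^0<\Delta_\theta$, we have $\delta>0$. Because $0<D^0<\pi$, we have $\alpha>0$. Together with $K,C_\varphi>0$, the ratio $r\defeq e^{-\alpha K C_\varphi\delta}$ lies strictly in $(0,1)$.

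Iterating by induction on $n$ gives $D_\theta(T_n)\leq r^n D_\theta(T_0)\leq r^n D^0$, so $(D_\theta(T_n))$ tends to $0$ at least geometrically. If one also wants the full synchronisation statement of Definition~\ref{def:sync}, note that $D_\theta$ is nonincreasing by Proposition~\ref{prop:phase_decay} and that $T_n\to\infty$. The latter holds because $\theta_m(T_n)=\theta_{\min}+2\pi n$ while $\dot\theta_m\leq 1+K$. Hence $D_\theta(t)\leq D_\theta(T_n)$ for $t\geq T_n$, and $\lim_{t\to\infty}D_\theta(t)=0$.

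The main obstacle is making sure the estimate on $[T_n,T_n+\delta_n]$ really holds uniformly in $n$. This requires three things: the window $[T_n,T_n+\delta_n]$ must be nonempty and contained in the coupling interval; both $\theta_m$ and $\theta_M$ must lie in $I_\theta+2\pi n$ there, so that $\varphi\geq C_\varphi$ at both; and the index $n$ in Lemma~\ref{lemma:borne_delta} must match the indexing of Proposition~\ref{prop:cycle_n}. To settle the first two points, I will check directly that $\theta_M(T_n)\leq\theta_{\min}+2\pi n+D^0<\theta_{\max}+2\pi n$. I will also use the $2\pi$-periodicity of $\varphi$ and of the sine to transport the first-cycle argument to the $n$-th cycle.
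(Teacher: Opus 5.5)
Your proposal is correct and follows the paper's proof. You combine the per-cycle contraction of Propositions~\ref{prop:cycle_0} and~\ref{prop:cycle_n} with the uniform bound $\delta_n\geq\delta$ of Lemma~\ref{lemma:borne_delta}, iterate geometrically, and then use the monotonicity of $D_\theta$ together with $T_n\to\infty$ to get $\lim_{t\to\infty}D_\theta(t)=0$. The differences are only cosmetic: you evaluate at $t=T_n+\delta$ instead of at $T_n+\delta_n$, and you obtain $T_n\to\infty$ from the speed bound $\dot\theta_m\leq 1+K$ rather than from the paper's induction $T_n>T_0+n\delta$. You also correctly treat $n=0$ separately, which the paper's proof folds into Proposition~\ref{prop:cycle_n}.
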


\begin{proof}{\textit{(Theorem \ref{thm:sync})}}
	Let $n\in\mathds N$.
	Recall the result in Proposition \ref{prop:cycle_n}, for all $t\in[T_n, T_n+\delta_n]$
	$$D_\theta(T_{n+1})\leq D_\theta(T_n)e^{-\alpha KC_\varphi(t - T_n)}.$$
	Especially, for $t = T_n + \delta_n$, we have 
	$$D_\theta(T_{n+1})\leq D_\theta(T_n)e^{-\alpha KC_\varphi\delta_n}.$$
	Also, since $\delta_n\geq \delta>0$ (see Lemma \ref{lemma:borne_delta}), then,
	$$D_\theta(T_{n+1})\leq D_\theta(T_n)e^{-\alpha KC_\varphi\delta}.$$
	Thus, $(D_\theta(T_n))_{n\in\mathds{N}}$ is a geometrically decreasing sequence with common ratio
	$$q := e^{-\alpha KC_\varphi\delta}\in]0,1[$$
	since $\alpha K C_\varphi \delta > 0$.
	Therefore, $$D_\theta(T_n)\to 0\text{ as }n\to\infty.$$
	Also, since $T_{n+1}>T_n+\delta_n$, an induction argument yields $$T_n>T_0+n\delta,$$
	and then $T_n\to+\infty$ as $n\to+\infty$.
	Finally, let $t>T_0$ and choose $n\in\mathds N$ such that $t\in[T_n, T_{n+1}]$, thus $$0\leq D_\theta(t)\leq D_\theta(T_n).$$
	As $t\to+\infty$, we have necessarily $n\to+\infty$ and since $D_\theta(T_n)\to 0$ as $n\to+\infty$, $D_\theta(t)\to 0$ as $t\to+\infty$.
\end{proof}

\section{Processing and analysis of experimental data}
Having established the main synchronisation result in the previous section, we now evaluate the relevance of the model by comparing it with experimental data. 
We begin by presenting the {Per2}::luciferase reporter dataset before introducing the signal processing approach used to extract quantitative features that will help confronting mathematical model and observations in the next section.\\
Quantitative characterisation of oscillatory features in experimental signals of circadian rhythms is crucial for uncovering underlying regulatory mechanisms and enabling model-driven hypothesis testing. 
Accurately quantifying key oscillatory parameters, such as the temporal evolution of the period and amplitude, sheds light on the entrainment dynamics and intercellular coupling governing circadian clock synchronisation or desynchronisation in ex vivo experiments. 
More generally, a rigorous quantification of oscillatory signals improves the preciseness of the interpretations, and thus enhances both mechanistic insight and translational application.\\
Specifically, this section details the data processing pipeline used to extract the instantaneous period and amplitude from the reporter recordings.  
Finally, a preliminary analysis of these quantities is provided, highlighting the main dynamical features of experimental data prior to model calibration.

\subsection{Experimental time series}\label{sec:bio_data}	
In this study, we use spheroids composed of MMHD3 hepatocytes as an experimental model system to monitor phase localisation within the circadian cycle. 
We utilised cells with two different periods: control cells, denoted as wild-type (WT), which exhibit a period close to 24 h, and Cry2KO cells, in which the {Cry2} clock gene has been knocked out, exhibiting a longer period.
Experiments were carried out on populations of $n=61$ spheroids containing approximatively  2000 cells each.	
In total, there are 20 datasets for WT and 15 for Cry2KO spheroids.
	At the beginning of the experiments, a 30 min pulse of dexamethasone is applied to chemically realign the circadian clocks of the cells within the spheroids.
	The MMHD3 cells display a strong contact inhibition when assembled in spheroids and therefore do not divide. 
Furthermore, the medium for each experiment is sufficiently rich to sustain cell activity throughout its duration, meaning that cell death can be assumed to be negligible. 
To precisely and quantitatively retrieve information regarding the circadian clock, the reporter gene {Per2}:luciferase, stably integrated into both WT and Cry2KO cells, is used as a proxy to record real-time photon emission by luciferase upon oxydation of the luciferin substrate, when the Per2 gene is expressed.\\
\begin{figure}[h!]
			\begin{center}
			\includegraphics[width=0.8\textwidth]{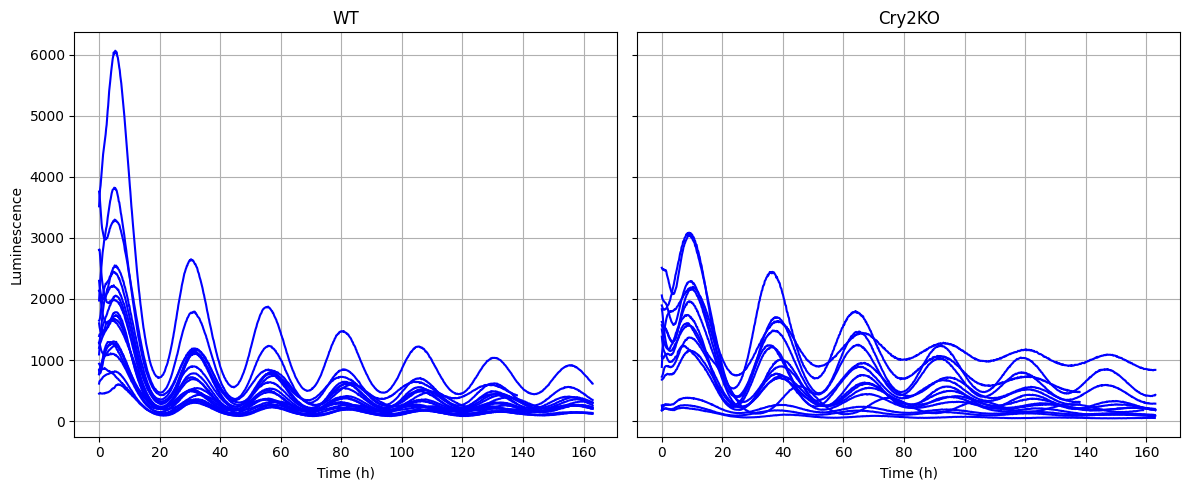}
			\caption{ Raw luminescence recordings for the WT (left) and Cry2KO (right) datasets. Each trace corresponds to one dataset of 61 spheroids of approximately 2000 cells.} 
			\label{fig:data_brut}
		\end{center}
\end{figure}
Figure \ref{fig:data_brut} shows the raw luminescence recordings for the WT (left) and Cry2KO (right) datasets, where each trace corresponds to one dataset. 
Oscillations of significant amplitude indicate that the {Per2} genes of all cells are transcribed synchronously, demonstrating coordination within the cell population.
Conversely, a flat trace indicates a desynchronised population.
Although more datasets are available for the WT populations, the Cry2KO populations appear to exhibit greater variability.
For all datasets, the first peak is excluded from the analysis, as it corresponds to an initial transitional phase characterised by high variability and limited reproducibility.\\
	
\noindent\textbf{Idealised data representation.} The data are time series representing the evolution of the bio-markers of the luciferin (indicating when the {Per2} gene is on).
	In order to describe the data at our disposal, we denote :
	\begin{itemize}
		\item $M \geq 1$ the number of luminescence experiments,
		\item $n_j\gg 1$ the number of measurements for the experiment indexed by $j$ where $j=0, \hdots , M-1,$
		\item $s\in \{ WT, Cry2KO \}$ the indication on the type of spheroid used in the experiment, $WT$ (resp. $Cry2KO$) for  spheroids composed of wild type cells (resp. cells with the gene {Cry2} knocked out),
		\item $dt^j$ the time step between observations of experiment indexed by $j = 0, \hdots , M-1.$ 
	\end{itemize}
	For some (large) $n_j \geq 1$, we have measurements $y_i^{n_j}[s]$ of a noisy signal of type $s$ localised around $i/n_j$, so that $i$ is a location parameter and $n_j$ a frequency parameter. We may idealise our data via a representation of the form
	\begin{equation} \label{eq:data_rep_gauss}
		y_i^{n_j}[s]=x^{j}\left(i\times dt^j; s\right)+\sigma^j[s] \xi_i^{n_j},\qquad i= 0,\hdots,n_j-1,\; j= 0,\hdots,M-1,
	\end{equation}
	\noindent
	where 
	$x^{j}(\cdot ; s)$ is the true (unknown) signal of interest and the $\xi_i$ are independent and identically distributed noise measurements, that we assume here to be standard Gaussian.
	This assumption is based on the following : the noise in the light intensity is generated as the superposition of a large number of random independent interactions at the molecular level.
	The quantity $\sigma^j [s] >0$ is a (fixed) noise level characterising the experiment indexed by $j$ of type $s$. 
	\\
	For the sake of clarity, we use a simplified notation in the following to describe a standard set of data which is rigorously formalised by \eqref{eq:data_rep_gauss}:
	\begin{equation} \label{eq:data_rep_gauss_s}
		y_i=x_i+\sigma \xi_i,\qquad i= 0,\hdots,N-1,
	\end{equation}
	where $i$ is the time localisation index and $N\in\mathds{N}^*$ the number of samples.
	Hence, $(x_i)_\seqint$ is the true (unknown) signal of interest. 
	
\subsection{Quantitative features of circadian oscillations}\label{sec:denoise_detrend}
The analysis of time-series data from circadian rhythm experiments conducted on hepatocyte spheroids presents several challenges.
First, the recording time is short relative to the oscillation period; due to experimental constraints, only a maximum of 10 cycles can be observed.
Furthermore, the signal intensity decreases over time, making the signal characteristics less clear.
Additionally, since the oscillation periods are around 24 hours, the relevant range of frequencies is very low and the distinction between trend and oscillatory features is blurry.
Finally, edge effects influence the signal characteristics at the beginning or end of the recording, causing unrealistic phase shifts or amplitude variations at these edges in the spectral domain.
In order to tackle these bottlenecks, we introduce a signal processing method based on wavelet decomposition to extract reliable quantitative features characterising these signals. The following signal processing procedure is using Python language and is available at \url{https://github.com/AnastasiaMARECHAL/InSyncPy}.

\subsubsection{Oscillatory features characterisation}
Continuous wavelet transform (CWT) approaches \cite{torrence1998practical,lilly2017element}, have emerged as powerful analytical tools thanks to their ability to bypass the strict stationarity assumptions required by traditional Fourier analysis, thereby enabling the accurate extraction of instantaneous oscillatory parameters. 
Despite these significant advantages, standard wavelet methods are fundamentally constrained by the uncertainty principle, which imposes an unavoidable trade-off between time and frequency localisation. 
In order to extract quantitative features of oscillating biological signals, existing tools like pyBOAT \cite{schmal2022analysis} typically rely on maximum power ridge tracking using standard Morlet wavelet. 
In the following, we describe a signal processing method based on SynchroSqueezed wavelet Transform (SST) \cite{daubechies2009synchrosqueezed} using the generalised Morlet wavelet \cite{martinez2022applications}.
The advantages are twofold: first, the generalised Morlet wavelet introduces an additional parameter that allows for explicit control over the trade-off between time and frequency localisation. 
Second, the SST sharpens frequency localisation and enhances the precision of instantaneous frequency estimation when analysing complex biological signals.

\paragraph{\textbf{Preprocessing.} }
Data preprocessing consists of denoising, trend removal, and edge trimming. 
First, we denoise the signals via wavelet decomposition and fixed-threshold selective reconstruction. 
Next, we remove the low-frequency baseline trend by smoothing the data with splines. 
Finally, we trim the extremities of the recordings to mitigate edge effects and align them to a common peak, thereby enabling robust comparisons.
More details are provided in Appendix \ref{sec:ap_denoise_detrend}.

\paragraph{\textbf{Quantitative descriptors of the oscillations.}}
\label{section:descriptors}
Now that preprocessing is complete, we start extracting the characteristics of the oscillations.
The main idea is to characterise the damping of the oscillations 
and to quantify the period evolution over time using the continuous wavelet transform (CWT) on a denoised and detrended signal. 

Signals of circadian rhythms are slow-varying oscillations with damped amplitude. 
They can be modeled as a monocomponent signal 
\begin{equation}
x(t) = A(t)e^{i\phi(t)}, \quad A(t) = e^{-\varepsilon t}, \quad \varepsilon>0,
\label{eq:def_quant_amplitude}
\end{equation}
with slowly varying phase $\phi(t)$. 
Quantitative descriptors of oscillatory features are obtained via the continuous wavelet transform (CWT) \cite{daubechies1992ten}: 
\begin{equation}
W_x(a,b) = \frac{1}{\sqrt{a}} \int_{-\infty}^{\infty} x(t) \overline{\psi\Big(\frac{t-b}{a}\Big)} dt,
\label{eq:wlt_coeff}
\end{equation}
where $a$ is the scale (inversely related to frequency), $b$ the time localisation and $\psi$ the appropriate chosen wavelet (in our case the generalised Morlet wavelet).
Then, we use the synchrosqueezed wavelet transform (SST) \cite{daubechies2009synchrosqueezed} to improve the time-frequency resolution.
First, we compute the instantaneous frequency of the signal denoted by $\omega_x$:
$$\omega_x(a,b) = -i\frac{\partial_b W_x(a,b)}{W_x(a,b)} $$
and then we get the SST of the signal:
\begin{equation}
T_x(\omega,b) = \int a^{-\tfrac{3}{2}}W_x(a,b)\delta(\omega_x(a,b) - \omega) da,
\label{eq:sst_coeff}
\end{equation}
where $\delta$ is the dirac function. 
The ridges extraction from the SST allows to track local maxima of the synchrosqueezed energy over time. The quantitative feature characterising the \textbf{frequency evolution} is given by 
\begin{equation}
	\omega_r(b) = \arg\max_\omega |T_x(\omega,b)|,
	\label{eq:def_quant_period}
\end{equation}
and the one characterising the amplitude evolution is 
$$A_{\rm inst}(b) = |T_x(\omega_r(b), b)|.$$
The parameter of the \textbf{amplitude decay}, $\varepsilon$, is estimated by fitting the instantaneous amplitude obtained with the CWT with a model $t\mapsto \exp(-\varepsilon t).$

The SST provides a robust framework for oscillatory features estimation via ridge extraction \cite{iatsenko2016extraction}, ensuring reliable performance under appropriate conditions such as reasonable signal-to-noise ratio, slow frequency variation, and suitable wavelet choice \cite{lilly2017element}.
More details are provided in the Appendix \ref{section:ap_descriptors}.

\paragraph{\textbf{Limitations at low amplitude.}  }
It should be noted that when the instantaneous amplitude of the signal is very low, the ridge extraction becomes more sensitive to noise. 
As a result, the estimated instantaneous period may become less reliable for these low-amplitude segments. 
This limitation is inherent to all time-frequency based approaches.

\subsubsection{Oscillatory features of the {Per2}:luciferase reporter}\label{sec:ana_data}
The luminescence recordings for the WT and Cry2KO cells (cf. Figure \ref{fig:data_brut}) are analysed using the signal processing method described in the previous section.
For the sake of visualisation, we propose the following transformation of the extracted frequency features derived from both the CWT and the SST. 
The frequency estimate has been converted into a period estimate by taking the inverse, and the mean has been computed, resulting in a single period estimator for each luminescence recording.
    \begin{figure}[h!]
		\begin{center}
			\includegraphics[height= 7cm,width=0.7\textwidth]{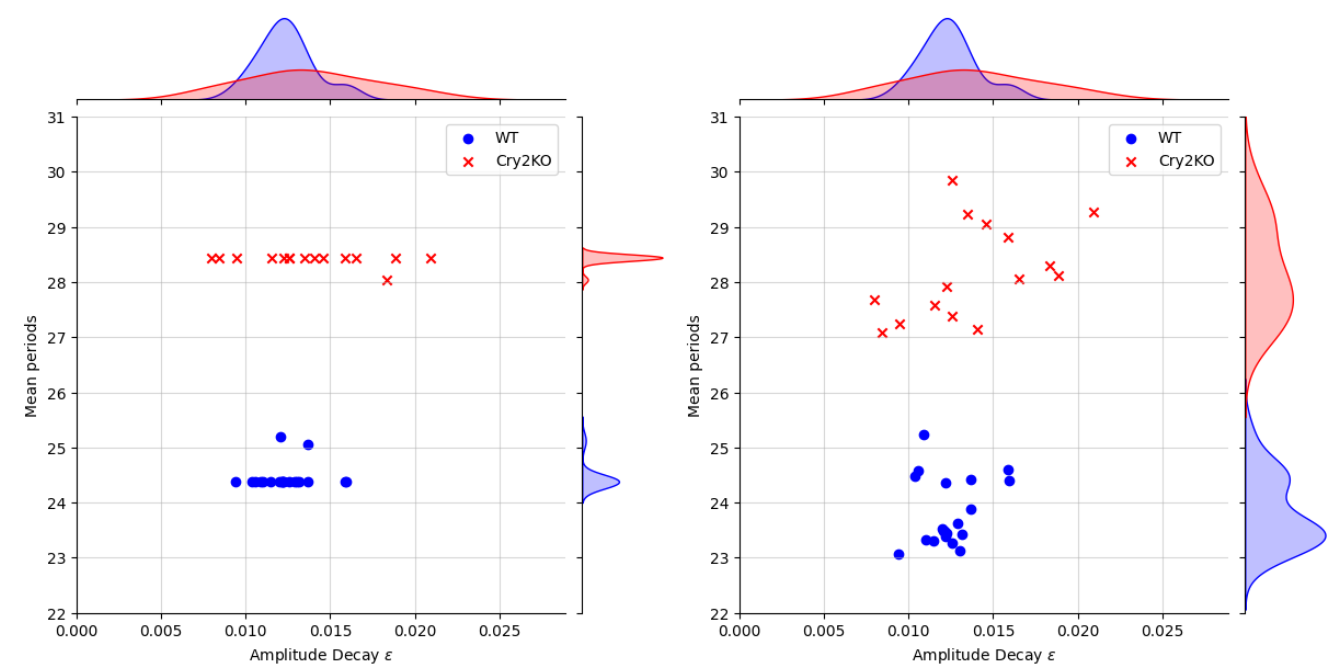}
			\caption{\small Scatter plots of the amplitude decay parameter $\varepsilon$ (x-axis) versus the mean period parameter (y-axis).
The period and amplitude decay estimates are either computed from the CWT coefficients (left) or the SST coefficients (right). 
The marginal distributions above (resp. on the right) the scatter plot characterise the spreading of the amplitude decay parameter's distribution (resp. the period parameter's distribution).
   The color blue (resp. red) is for the WT population experiments (resp. the Cry2KO population experiments).
} 
			\label{fig:scatter_data}
		\end{center}
	\end{figure}
	
\noindent Figure \ref{fig:scatter_data} allows to visualise the amplitude decay parameter against the mean period parameter for each luminescence recording, contrasting the CWT estimates (left panel) with the SST estimates (right panel).
The first striking observation is that there is a clear separation between features of WT cells (blue dot) and ones of Cry2KO cells (red cross). 
This separation is predominantly driven by the period estimators. 
In the CWT analysis, period distributions are highly concentrated around a single value on the y-axis for both cell types.
When examining the SST scatter plot, the period estimates exhibit a wider vertical dispersion. 
However, the fundamental period separation between the two cell populations remains clearly identifiable.
The Cry2KO cells have a longer period (mean: 28.42h) as compared to the WT cells (mean: 24.46h), which is expected from the literature \cite{guenthner2014circadian}.
\\
Concerning the amplitude decay of the oscillations, across both methods, the amplitude parameters' distribution for the WT cells is mainly contained between $0.009$ and $0.016$ whereas the amplitude parameters' distribution for the Cry2KO cells is noticeably more spread (between $0.007$ and $0.021$).
This observation consistently corroborates the commonly accepted fact by biological experimentalists that genetic perturbation on a living model induces more variability for the behaviour of the modified living organism. 
The second striking observation from this analysis is that the variability predominantly impacts the amplitude of the oscillations.

\section{Outputs system to simulate circadian rhythm}

To compare system \eqref{eq:adaptive_kura} to experimental {Per2}::luciferase recordings (see Figure \ref{fig:data_brut}), we adapt the reporter model introduced in \cite{kaji2023enhanced}.
The time evolution of the luminescence produced by the network of oscillators is assimilated to the dynamic of the {Per2} reporter.
In \cite{kaji2023enhanced}, the output system is modelled by a differential equation where the reporter protein's concentration changes based on a clock-regulated production rate, a basal production term, a linear degradation rate, and independent stochastic noise. 
Since the signal processing step removes noise, trend and normalises the amplitude of the oscillations (cf. Section \ref{sec:denoise_detrend}), we model the reporter concentration dynamic the following way. 
We denote $P(t)$ the \textit{ideal} reporter's concentration (without taking into account the impact of the experimental setting) at time $t$ and model its dynamic by:
\begin{equation}
\left\{
\begin{array}{rl}
\frac{d}{dt}P(t)&= \underbrace{\frac{1}{N}\sum_{k = 1}^N f(\theta_k(t))}_{\text{Production driven by the oscillators' phases}} -\underbrace{k_PP(t)}_{\text{Degradation of the luminescence}}\\
P(0) &= P^0,
\end{array}
\right.
\label{eq:per_model}
\end{equation}
where the phases $(\theta_k)_{k=1,\hdots,N}$ are solution of the system \eqref{eq:adaptive_kura} and $k_P>0$ is the degradation rate of the circadian clock reporter.
The reporter production is then modelled through a periodic production function $f$:
\begin{equation}
f(\theta) = 1+\cos\left((\theta-\theta_{PER})\frac{2\pi}{L}\right),
\label{eq:prod_reporter}
\end{equation}
with $\theta_{PER}$ the parameter corresponding to the CT with maximum production of the reporter.
In the mammalian circadian clock architecture described by Takahashi et al. \cite{takahashi2017transcriptional}, {Per2} repressor occupancy peaks at approximately  CT16$\approx 2L/3$.
Hence, we set $\theta_{PER} =2L/3$ in \eqref{eq:prod_reporter} to align the production peak with the biological knowledge.\\
Then, we denote $\tilde{P}(t)$ the centered and renormalised reporter output (where we subtract its mean and divide by its absolute maximum value). 
Moreover, in these experiments, the amplitude of the {Per2}::luciferase signal provides a direct indicator of synchronisation at the population scale. 
When individual cells oscillate with similar phases, their {Per2}'s reporter outputs add up and produce a signal of bigger amplitude compared to a desynchronised state. 
However, since in long-term recordings without medium replacement hepatocytes gradually lose their nutrients, it leads inevitably to cell death which consequently reduces reporter protein levels over time.
Hence, the overall decay of the oscillations amplitude over the whole time window of the experiments (cf. Figure \ref{fig:data_brut}) are modeled by an exponential loss term with parameter $\lambda >0$ and a more realistic output $Y(t)$ is given by: 
\begin{equation}
			Y(t) = \tilde{P}(t)e^{-\lambda t},\quad t>0.
			\label{eq:luci}
\end{equation}
Finally, to align experimental recordings and model output, we set the initial condition of equation \eqref{eq:prod_reporter} such that $P^0$ is a maximal point of the trajectory of the output model:
\begin{equation}
		P^0=\frac{\frac{1}{N}\sum_{j=1}^Nf\left(\theta_j^0\right)}{k_P}.
		\label{eq:P_0}
\end{equation}
Indeed, the experimental data that have been denoised, detrended, centered, normalised and trimmed such that the beginning of the signal corresponds to the first peak of an oscillation. 
This reporter model allows to couple the phase-based description for the cells' circadian rhythms to a protein induced luminescence processes, thus facilitating the comparison between the outputs of the Kuramoto model with experimental data.
It should be noted that luciferase production could be more detailed at the intracellular scale (see e.g. \cite{kurosawa2002comparative}), however since we are interested in the macroscopic behaviour of the cell population adding more complexity (in terms of parameters, size of the system, etc) in the high-dimensional system studied here would be adversarial to that goal.
In this study, we will consider the simplified version proposed in \cite{kurosawa2002comparative} and detailed above.

\begin{figure}[h!]
\begin{center}
\includegraphics[width=0.8\textwidth]{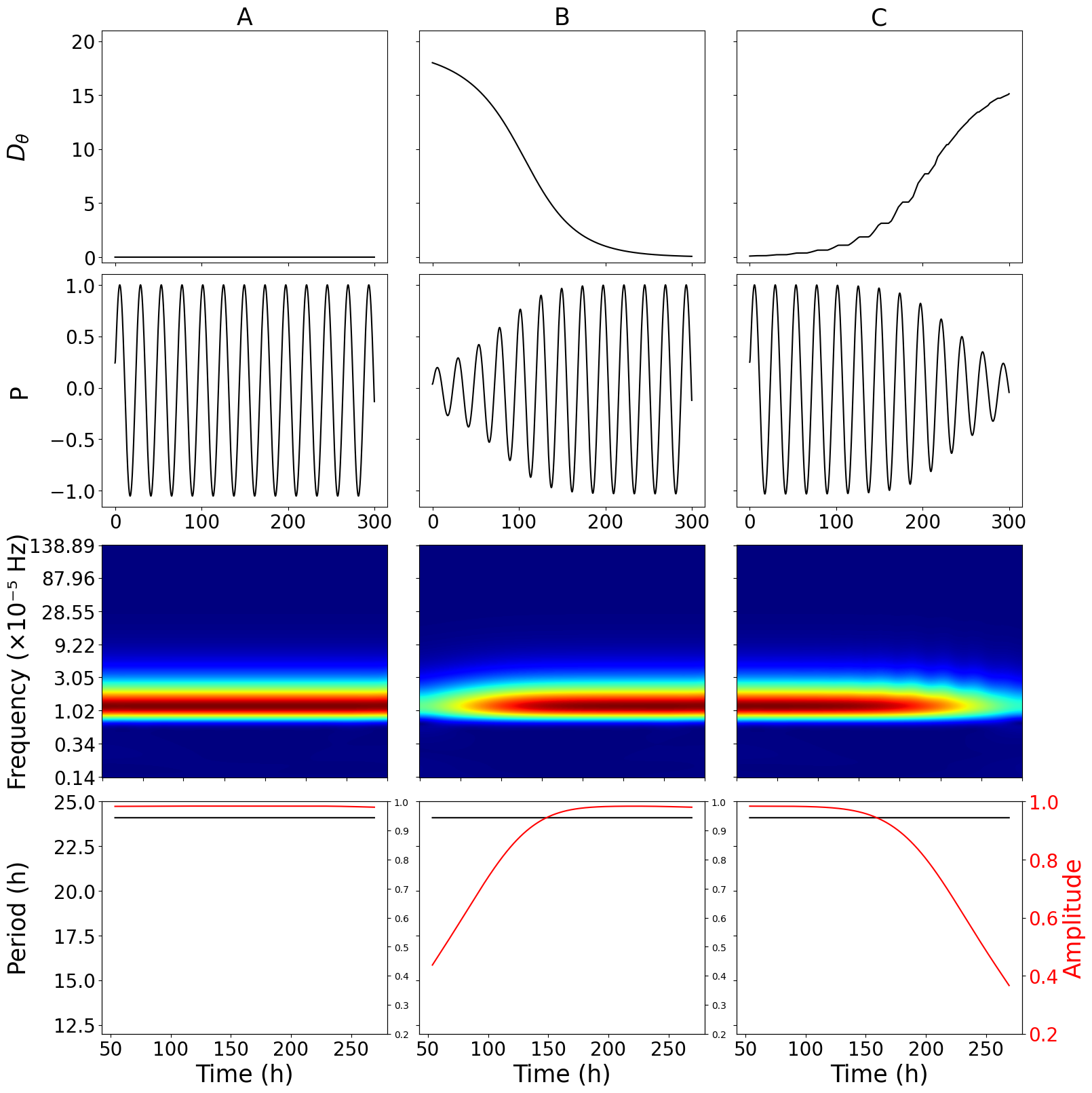}
\end{center}
\caption{\small \textbf{Graphical illustration of the output system.} 
The figure shows the temporal evolution of the phase diameter (first row), the output system \eqref{eq:prod_reporter} (second row), the scaleogram of the output system (third row) and the instantaneous amplitude (red) and instantaneous period (black, last row). 
The x-axes are shared. The simulations were performed with $L = 24$, $k_P = 0.2$, and $P^0$ given in Eq \eqref{eq:P_0}. 
Three different behaviours of the system are shown column-wise: 
\textbf{A:} the oscillators are initially aligned on the same phase and no interaction happens in the system. The parameters values are $N = 10, K = 0, \Delta_\theta = L, D^0 = 0$.
\textbf{B:} The initial phase distribution is spread out and the oscillators are synchronising over time. The parameters values: $N = 10, K = 0.1, \Delta_\theta = L, D^0 = 3L/4$. 
\textbf{C:} The oscillators are aligned on the same phase and the oscillators are desynchronising over time. The parameters values: $N = 2, K = -0.17, \Delta_\theta = L/2, D^0 = 0.1$.}
		\label{fig:output_model}
	\end{figure}

Figure \ref{fig:output_model} illustrates the synchronisation dynamics within a network of circadian oscillators. 
While the phase diameter serves as the rigorous mathematical metric for quantifying network synchronisation, direct experimental observation of this state is challenging, as the available data is limited to macroscopic {Per2}::luciferase bioluminescence signals. 
To highlight the links between the theoretical phase metric (the phase diameter \eqref{eq:phase_diameter}) and observable reporter data, we conducted \textit{in silico} simulations to characterise the reporter output system under three distinct network states: fully synchronised (Figure \ref{fig:output_model} \textbf{A}), dynamically synchronising (Figure \ref{fig:output_model} \textbf{B}), and desynchronising (Figure \ref{fig:output_model} \textbf{C}).

A critical distinction in how synchronisation dynamics manifest in the macroscopic signal can be concluded from this analysis. 
Specifically, the processes of synchronisation and desynchronisation exert decoupled effects on the instantaneous amplitude and the instantaneous period.
The impact of cellular interactions on the instantaneous period is negligible; instead, the instantaneous amplitude emerges as the primary proxy for the underlying phase distribution of the oscillators. 
During network synchronisation, the aggregate output yields robust interference, resulting in high-amplitude oscillations and a concurrent amplification of signal energy visible on the scaleogram (Figure \ref{fig:output_model} \textbf{B}). 
Conversely, as the oscillators desynchronise, phase dispersion causes the signal energy to dissipate, yielding a low-amplitude profile that asymptotically approaches zero (Figure \ref{fig:output_model} \textbf{C}). 
Ultimately, it demonstrates how the temporal variations in signal amplitude and scaleogram energy provide a robust framework for inferring the unobservable phase distributions of cellular circadian clocks.

\section{Calibration procedure}

A robust calibration approach is required to link the mechanistic framework of the adaptive Kuramoto model \eqref{eq:adaptive_kura} and the reporter system \eqref{eq:per_model}-\eqref{eq:luci} with the experimental observations. 
In this section, we detail the methodology used to estimate the model’s parameters. 
We first outline the parameter space, the objective functions, and the filtering algorithm. 
Subsequently, we provide a biological interpretation of the resulting parameter distributions and the correlation matrix, with a specific focus on the implications of parameter identifiability.

\subsection{Parameter space and filtering procedure}\label{sec:param_filter}
\paragraph{The parameter space.} The mathematical model of coupled circadian clocks relies on a set of parameters governing the circadian oscillation, the dynamic coupling of hepatocytes and the dynamics of the luminescence reporter.
The parameters ranges and biological meaning are summarised in Table \ref{tab:para} and additional details are provided in the Appendix \ref{sec:ap_para}. 
\begin{table}[h!]
		\begin{center}
			\begin{tabular}{|c|c|c|c|c|}
\hline
				&Name & Values/Range & Units& Description\\
\hline
				\small Circadian clocks &&&&\\
				 &$L^{\rm WT}$& $24.46$ & h&\small Estimated period of the WT dataset\\
				&$L^{\rm Cry2KO}$& $ 28.42$& h&\small Estimated period of the Cry2KO dataset\\
				&$N$ &100&-&\small Number of oscillators\\
\hline
				\small Initial condition&&&&\\
				& $\sigma$& $[10^{-2};10^2]$& CT &\small Initial distribution spread out parameter \\
				& $P^0$ & see Eq \eqref{eq:P_0} &-& \small Initial photon concentration\\
\hline
				\small Phase model&&&&\\
				& $\Omega$ & $1$& CT/h&\small Intrinsic frequencies\\
				& $K$ & $(0;1)$&-&\small Coupling strength\\
				& $\Delta_\theta$ & $[0;L]$ &CT&\small Width of the coupling interval $I_\theta$ \\
\hline
				\small Output model&&&&\\
				 & $k_{P}$ &$[0;1]$ &h$^{-1}$&\small Degradation rate of the reporter protein \\
				& $\lambda$&$[0;0.1]$&h$^{-1}$&\small Degradation rate of the luminescence signal\\
				\hline
				
			\end{tabular}
			\caption{Table of the parameters of the models \eqref{eq:adaptive_kura} and \eqref{eq:per_model}-\eqref{eq:luci}.}
			\label{tab:bio}
		\end{center}
	\end{table}
Some parameters are fixed a priori based on established properties of mammalian circadian clocks and for computational convenience:
\begin{itemize}
\item \textbf{The period $L$.} Since the instantaneous periods of each experiments show little variability across datasets (cf. Section \ref{sec:ana_data} and Figure \ref{fig:scatter_data}), we fix the period to the mean value, i.e. the WT population exhibits a period of $L^{\rm WT}=24.46$ hours, whereas the Cry2KO population shows a longer period of $L^{\rm Cry2KO}=28.42$ hours.
\item \textbf{The intrinsic frequencies $\Omega$.} Since we are modelling an homogeneous population of hepatocytes with the same period for their circadian clocks, we fix $\Omega = 1$ so that the period of the circadian cycle is only determined by the parameter $L$.
\item \textbf{The initial condition of the reporter system $P^0$.} We set the initial condition for the reporter model as a function of the initial phase distribution in order to fit the experimental data (cf. \eqref{eq:P_0}).
\end{itemize}
Similarly, parameters dictating the dynamical behaviour of the system were constrained to prevent physically impossible rapid state changes, acknowledging the transcriptional-translational feedback loop delays inherent to circadian timekeeping. 
Hence, to ensure biological plausibility and computationally tractable filtering, it is necessary to define boundaries and assumptions for these free parameters:
\begin{itemize}
\item \textbf{The spread of the initial condition $\sigma$.} The synchronisation state of the population at initial time of the experiment is unknown. Then the initial phase distribution is parametrised to account for different levels of synchronisation and we assume it follows a truncated Gaussian distribution on the interval $[0,L]$, centered at $L/2$ with density $f$ such that:
	$$
	f(\phi)d\phi=\frac{\exp\left(-\frac{(\phi-L/2)^2}{2\sigma^2}\right)}
	{\int_0^L \exp\left(-\frac{(x-L/2)^2}{2\sigma^2}\right)\,dx} d\phi,
	\qquad \phi\in[0;L],
	$$
where parameter $\sigma\in[10^{-2},10^{2}]$ controls the spread of the distribution (see Appendix \ref{sec:ap_para}). 
\item \textbf{The coupling strength $K$.} The coupling strength of interactions between individual clocks is assumed in $(0; 1)$ (see Hyp. \ref{hyp:K}). It allows to avoid the totally uncoupled state and the opposite state where $K$ is at its maximum admissible coupling strength.
\item \textbf{The width of the coupling window $\Delta_\theta$.} We set the coupling window width such that $\Delta_\theta \in [0,L]$. It follows that the network of oscillators can interact in the whole spectrum from totally uncoupled to coupled at all stages of the circadian cycle.
\item \textbf{The degradation rate of the reporter's protein $k_P$.} It is assumed that $ k_P\in [0,1]$. 
\item \textbf{The exponential decay of the output signal $\lambda$.} Since the amplitude decay estimates exhibit variability across datasets as explained in Section \ref{sec:ana_data}, the parameter $\lambda$ was treated as a calibration parameter and explored over the range $[0;0.1]$.
\end{itemize}

\paragraph{The objective functions.}  
The calibration of these parameters is framed as an inverse problem. 
We define our objective functions $F_i$ for $i=1,2$ such that $F_1$ is  the mean squared error between the simulated reporter system \eqref{eq:luci} and the averaged sum of the experiments of a specific type of spheroid $s\in\{WT,Cry2KO\}$ (cf. Section \ref{sec:bio_data}) and $F_2$ is the mean squared error between the projections of these two signals in the wavelet domain on a specific scale window.

Let $\Gamma:=\{\sigma, K, \Delta_\theta, k_P, \lambda\}$ denote the set of free parameters and $Y(t;\Gamma)$ denote the numerical approximation of the reporter output \eqref{eq:luci} evaluated at time $t$ with parameters $\Gamma$.
Recall the notations of the data representation (Section \ref{sec:bio_data}) and let $M[s]$ for $s\in\{WT,Cry2KO\}$ be the number of experiments of type $s$ such that $M[WT] + M[Cry2KO]= M$.
We performed the calibration on the two different datasets WT and Cry2KO, aiming to estimate the five parameters: $\sigma, K, \Delta_\theta, k_P$ and $\lambda$, and to compare the results between the two calibrations.
	For each dataset, let $T_j = (n_j-1)\times dt^j$ be the maximal time of the $j$-th recording with $j\in\{1, \hdots, M[s]\}$.	
	
To compare the mathematical model to the experimental data, two metrics are used. 
The first one is the standard mean squared error (MSE), defined as 
	\begin{equation}
	    F_1(\Gamma) = \frac{1}{M[s]}\sum_{j = 1}^{M[s]}\frac{1}{T_j}\int_0^{T_j}(y_j(t) - Y(t;\Gamma))^2dt
	    \label{eq:calib_F1}
	\end{equation}
where $y_j$ is the experimental observation (cf. \eqref{eq:data_rep_gauss_s}).
	This choice of metric allows for a direct comparison between experimental and simulated trajectories over time by measuring the $\mathcal{L}^2$-deviation between the curves. 
	In particular, the MSE penalises divergence in both amplitude and temporal dynamics, making it a particularly suitable indicator for evaluating the model’s ability to reproduce the full shape of the observed signals.
	
	The second metric captures differences in the time–frequency structure of the signals by comparing their wavelet coefficients defining in Eq. \eqref{eq:wlt_coeff}:
	\begin{equation}
	    F_2(\Gamma) = \frac{1}{M[s]}\sum_{j = 1}^{M[s]}\left|\left|\ W_{y_j} - W_{Y(\cdot;\Gamma)}\ \right|\right|_{F}
	    \label{eq:calib_F2}
	\end{equation}
where $||.||_{F}$ is the Frobenius norm given by
$$||W_x||_{F} = \left(\sum_{a, b}|W_x(a, b)|^2\right)^{1/2}.$$
The wavelet coefficients are computed over a restricted range of scales corresponding to relevant periods, namely between 15h and 35h, in order to focus the comparison on a relevant range of scales for the circadian dynamics.

\begin{figure}[!h]
\begin{center}
\begin{tikzpicture}[
    boxblue/.style={
        draw=blue, 
        fill=blue!20, 
        rounded corners=6pt, 
        line width=1pt, 
        minimum width=3cm, 
        minimum height=1.2cm, 
        font=\Large
    },
    boxred/.style={
        draw=red, 
        fill=red!20, 
        rounded corners=6pt, 
        line width=1pt, 
        minimum width=3cm, 
        minimum height=1.2cm, 
        font=\Large
    },
    arrow/.style={
        -stealth, 
        line width=1pt
    }
]

\node (US) at (0, 2.5) {\large Uniform sampling};
\node (GS) at (7.25, 2.5) {\large Gaussian sampling};
\node at (3, -2) {\large Filtering};

\node[boxblue] (J0) at (0, 0) {$\mathcal{J}_0$};
\node[boxblue] (J1) at (0, -4) {$\mathcal{J}_1$};

\node[boxred] (J2) at (6, 0) {$\mathcal{J}_2$};
\node[boxred] (J3) at (6, -4) {$\mathcal{J}_3$};

\draw[arrow] (US) -- (J0);
\draw[arrow] (J0) -- (J1);
\draw[arrow] (J2) -- (J3);

\draw[arrow] (J1.south) 
    -- (0, -5.5) 
    -- node[below=4pt] {\large Mean and covariance} (8.5, -5.5) 
    -- (8.5, 1.5) 
    -- (6, 1.5) 
    -- (J2.north);

\end{tikzpicture}
\end{center}
\caption{Schematic view of the step-wise algorithm used to select parameter sets. 
The collection $\mathcal{J}_0$ of parameter sets is distributed uniformly over the hypercube of Table \ref{tab:bio}. 
The collection $\mathcal{J}_1$ is obtained by filtering $\mathcal{J}_0$ keeping the $0.1\%$ parameters sets with the lowest evaluation of the objective function $F_1$ \eqref{eq:calib_F1}. 
The collection $\mathcal{J}_2$ is obtained by re-sampling the hypercube with a Gaussian distribution fitted on $\mathcal{J}_1$.
The collection $\mathcal{J}_3$ is obtained by filtering $\mathcal{J}_2$ keeping the $0.1\%$ parameters sets with the lowest evaluation of the objective function $F_2$ \eqref{eq:calib_F2}. 
The final collection $\mathcal{J}_3$ is a collection of parameter sets that are consistent with data and minimizes the mean square deviations for the time series and for their projection on the time-frequency domain.}\label{fig:monte_carlo}
\end{figure}

\paragraph{The filtering procedure.} In the context of inverse problem, e.g. parameter estimations, we propose a two-step quasi-Monte-Carlo calibration procedure \cite{chaaya2024continuous} that sequentially captures macroscopic energy dynamics and scale-localised transient features (see Figure \ref{fig:monte_carlo}). 
For a given set of parameters $\Gamma$, the computation of both criterion $F_1$ and $F_2$ requires two numerical simulation approximations, one for the Wild type (WT) data and one for the Cry2KO data. 
The integrals over time intervals are computed with a trapezoidal rule. 
The computation of the gradient $\nabla_\Gamma F_i(\Gamma)$ of any coordinate of the criterion can not be obtained analytically. 
This means that we have no descent direction to minimize our criteria.
The space we have to explore is the hypercube of dimension 5 defined by the range column of Table \ref{tab:bio}. 
Hence, we rely on a two-stage algorithm.
In the first step, the standard MSE $F_1$ \eqref{eq:calib_F1} between the empirical data and model output is used to explore uniformly the whole range of parameters and fit the overall amplitude and frequency dynamics of the system. 
This initial time-domain optimization establishes a preliminary basin of attraction in the parameter space.
From this collection, we adjust a Gaussian distribution truncated to the initial hypercube.
In a second step, the distribution is used to draw new parameter sets that are informed by the data.
We refine the parameter estimates by selecting among these draws the best parameter sets according to the second criterion using the Frobenius norm of the error matrix $F_2$ \eqref{eq:calib_F2} between the CWT of the empirical and simulated data, strictly confined to a predefined and pertinent scale range (the range with the most biologically relevance for circadian clock data). 
This scale-restricted CWT mapping acts as a targeted time-frequency filter rejecting out-of-band frequency features.
The results of the calibration procedure are summarized in the Table \ref{tab:para} and Figure \ref{fig:result} shows the model outputs corresponding to the best $0.1\%$ parameter sets obtained during the calibration procedure.
\begin{table}[h!]
\begin{center}
	\begin{tabular}{|c|c|c|c|c|c|}
			\hline
			& $\sigma$ & $\Delta_\theta$ & $K$ & $k_P$ & $\lambda$\\
			\hline
			Range & $[10^{-2};10^2]$ & $[0; L]$ & $[0; 1]$ & $[0; 1]$ & $[0; 0.1]$\\
			\hline
			WT& $18.96\pm 2.79$ & $0.69\pm 0.16$ & $2.20\pm 0.77$ & $0.21\pm 0.018$ & $0.015 \pm 0.0005$\\
			Cry2KO & $26.40\pm 1.29$ & $0.87\pm 0.08$ & $6.78\pm 1.49$ & $0.15\pm 0.046$ & $0.019\pm 0.0017$\\
			\hline
	\end{tabular}
\end{center}
	\caption{Outputs of the filtering procedure. Range for the free parameters: estimated mean values of the parameters with standard deviation for the two data types: Wild type (WT), cells with {Cry2} gene knocked out (Cry2KO).}
		\label{tab:para}
	\end{table}

\subsection{Interpretation of the calibration results: posterior analysis and phenotypic characterisation}

Rather than reducing complex dynamical models to a deterministic point estimate, we try to understand the parameter uncertainty. 
In this non-linear system, a multitude of distinct parameter combinations can yield output errors functionally indistinguishable from a visual perspective based on the biological variability (see Figure \ref{fig:result}). 
Consequently, relying on an isolated "optimal" parameter set invariably forces the calibration to overfit to the mean trajectory and creates an artificial sense of certainty.
Furthermore, retaining an ensemble of parameter sets explicitly acknowledges the inherent physical heterogeneity of the biological process and provides a more accurate representation of system reliability.
To assess the identifiability of the mathematical model and the physiological impact of the Cry2 knockout, the posterior parameter distributions, defined as the top 0.1$\%$ of tuples minimizing the objective functions, were subjected to a comprehensive analysis. \\
Recall that the observed system behavior is governed by two interacting modules.
The oscillators' network is parametrised by its coupling strength ($K$) and its coupling window size ($\Delta_\theta$), which dictate the phase interaction and synchronization capacity of the cellular population. 
The initial state of this network is characterized by $\sigma$, representing the spread of the initial distribution of the oscillators. 
Concurrently, the observable luminescence output is modulated by the intrinsic reporter degradation rate ($k_p$) and a macroscopic signal decay parameter ($\lambda$), which captures the exponential attenuation of the signal amplitude due to experimental constraints and progressive cell death.\\
First, we analyse the marginal probability densities (see Figure \ref{fig:marginal_distributions}) in order to characterise the phenotypic effect of the knockout of {Cry2} on the parameters of the mechanistic model. 
Then, we assess the monotonic dependencies and compensatory mechanisms via Spearman rank correlation (Figure \ref{fig:spearman_correlation}).

\begin{figure}[h!]
\centering
\includegraphics[width=\textwidth]{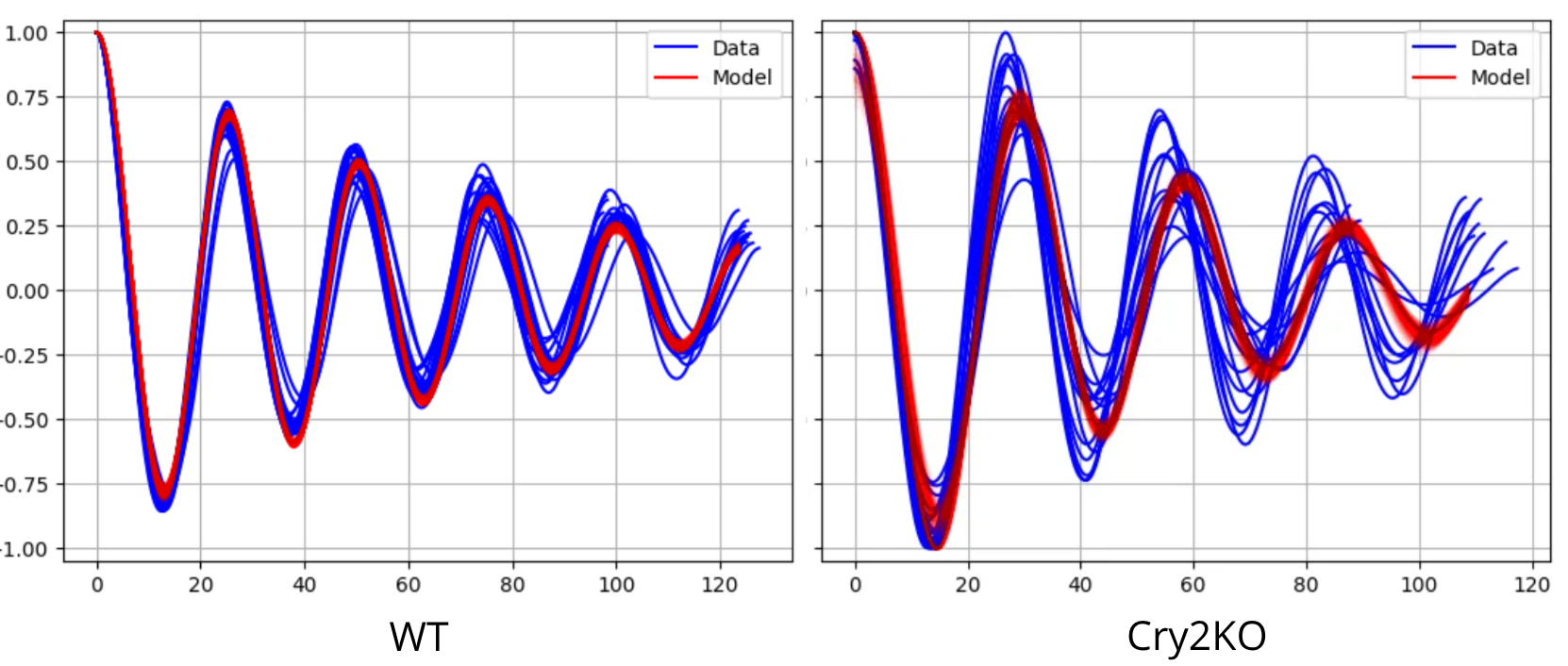}
\caption{Graphs of the processed experimental data (blue) and the calibrated numerical outputs (red). Left: WT dataset, right: Cry2KO dataset. The simulations correspond to the trajectories associated to the parameter sets $\mathcal{J}_3$ representing the $0.1\%$ parameter sets with lowest values for the objective function $F_2$ .}\label{fig:result}
\end{figure}

\paragraph{ Validation and trajectory dispersion} The two-step quasi-Monte Carlo calibration algorithm successfully identified parameter sets that reproduce the main features of the experimental signals across both datasets.
A first assessment of the simulated state space (Figure \ref{fig:result}) reveals that the calibrated trajectories are more tightly concentrated for the Wild Type (WT) dataset than for the Cry2KO dataset. 
This disparity in dispersion accurately reflects the larger intrinsic biological variability observed in the corresponding Cry2KO experimental recordings. 

\begin{figure}[htbp]
    \centering
    \includegraphics[width=0.8\textwidth]{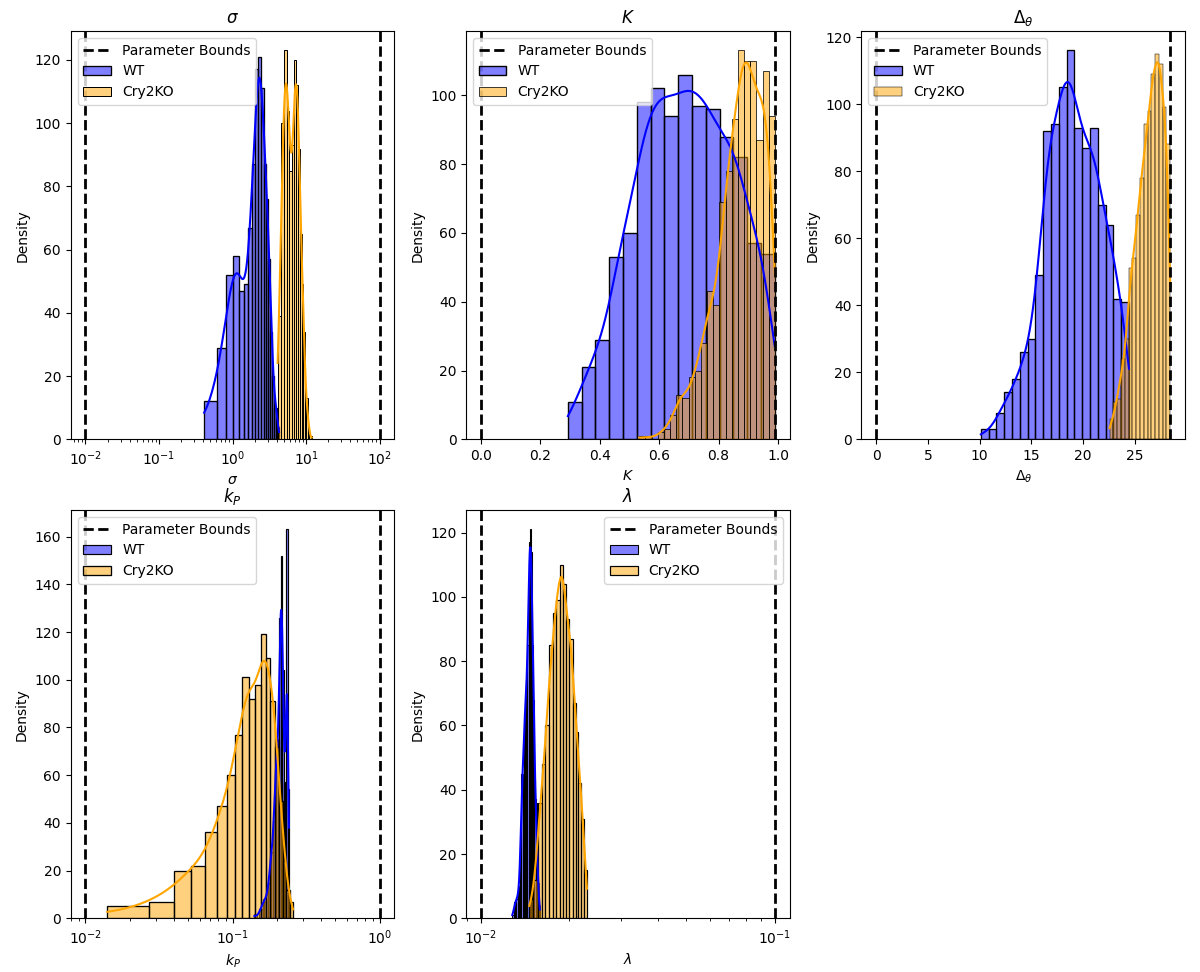}
    \caption{Marginal posterior probability densities for the five model parameters ($\sigma$, $K$, $\Delta_\theta$, $k_p$, and $\lambda$). The blue histograms correspond to WT, while the orange histograms represent Cry2KO. Dashed black vertical lines indicate the uniform prior boundaries established during the initial sampling phase. 
    }
    \label{fig:marginal_distributions}
\end{figure}

\paragraph{Phenotypic divergence and marginal constraints} The marginal distributions of the accepted parameter sets (Figure \ref{fig:marginal_distributions}) capture the phenotypic shift between the WT and Cry2KO settings. 
The posterior densities for the parameters $\sigma$, $\Delta_\theta$, and $\lambda$ exhibit distinct, non-overlapping separations between the two experimental conditions. 
This divergence provides strong quantitative evidence that the knockout of {Cry2} gene fundamentally alters the biophysical mechanisms governed by these specific parameters (e.g. the initial spread of the phase distribution, the length of the phase coupling interval and the exponential decay of the luminescence signal).\\ 
Interestingly, the increased trajectory variability observed in the Cry2KO dataset is not uniformly reflected across all calibrated parameters. 
After calibration, the reporter model parameters ($k_P$ and $\lambda$) consistently display much narrower, more identifiable distributions than the parameters of the adaptative Kuramoto model ($K$ and $\Delta_\theta$) in both datasets. 
Furthermore, the broader dispersion of the Cry2KO system is exclusively absorbed by these reporter parameters; $k_P$ and $\lambda$ exhibit notably larger dispersion for Cry2KO compared to WT, whereas the parameters of the Kuramoto model do not show this same trend of increased variance.
This also suggests a hierarchy between the parameters' identifiability where the parameters of the reporter model are more practically identifiable than the parameters of the Kuramoto model.\\
In addition, the marginal densities also reveal critical nuances regarding parameter constraints. 
For both datasets, the accepted tuples successfully satisfy the physical constraint $\Delta_\theta < L$. 
However, the posterior mass for $\Delta_\theta$ in the Cry2KO condition is heavily truncated against the upper bound of the prior domain, suggesting the true physiological optimum may press closely against this predefined boundary. \\
Furthermore, while the parameter $k_p$ is tightly constrained and identifiable in the WT condition, its posterior density degenerates into a broad, left-skewed distribution in the Cry2KO setting. 
This loss of identifiability implies that the system shifts into a dynamic regime where the precise value of $k_p$ becomes less critical to the targeted physical scales. 
Lastly, the parameter $K$ exhibits practical unidentifiability across both experimental settings; while the data gently preferentially weights higher values, the posterior spans nearly the entire upper half of the prior range suggesting strong coupled interactions between the circadian clocks.

\begin{figure}[htbp]
    \centering
    \includegraphics[width=\textwidth]{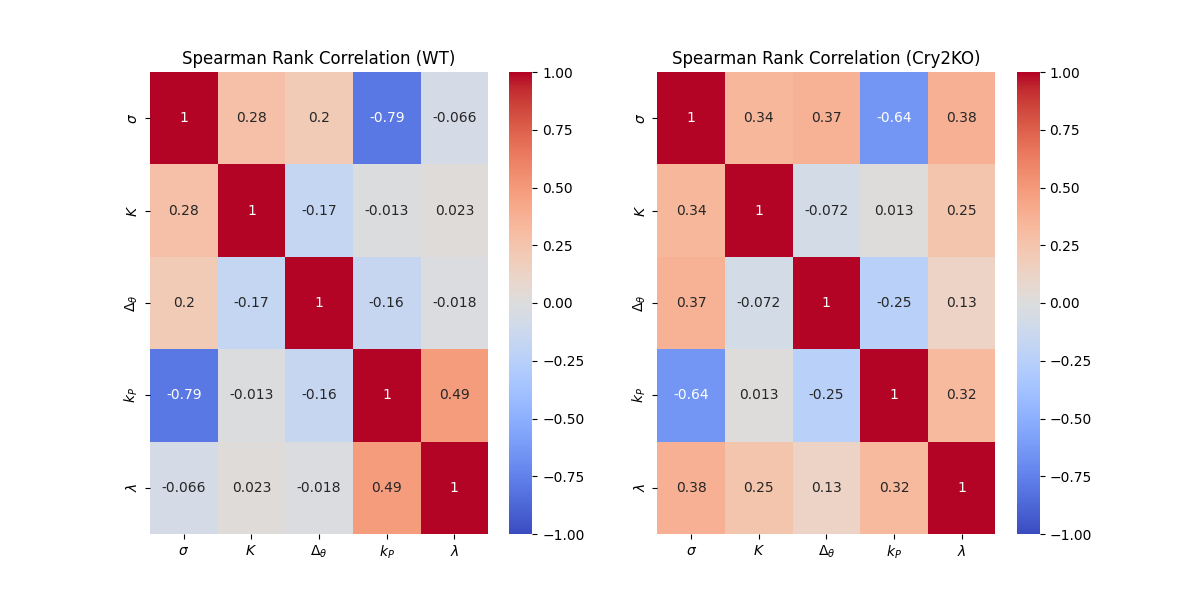}
    \caption{Spearman rank correlation heatmaps of the parameter sets for the WT (left) and Cry2KO (right) conditions. This metric quantifies the strength of monotonic dependencies between parameter pairs. 
    }
    \label{fig:spearman_correlation}
\end{figure}

\paragraph{Parameter identifiability and compensatory mechanisms} By examining the posterior parameter distributions, specifically the Spearman rank correlations in Figure \ref{fig:spearman_correlation}, of WT and Cry2KO experimental settings, we identify compensatory mechanisms between the initial system state, network coupling strength, and reporter degradation kinetics.\\
In both experimental conditions, the most prominent parametric dependency is the strong negative correlation between the initial distribution spread $\sigma$ and the reporter degradation rate $k_p$. 
In the WT setting, this coupling is highly pronounced ($-0.79$) as well as in the Cry2KO setting ($-0.64$).
It confirms a strong trade-off in both experimental settings: an increase in $\sigma$ must be compensated by a decrease in $k_p$ to maintain the same system output.\\ 
The most striking divergence between the two settings occurs in the relationship between the initial distribution spread $\sigma$ and the macroscopic decay rate $\lambda$. 
In the WT model, these parameters are statistically independent ($-0.066$), indicating that the initial state of the network does not interfere with the inference of long-term signal attenuation caused by experimental constraints.
However, in the Cry2KO setting, a moderate positive correlation emerges ($0.38$). 
This shift introduces an identifiability challenge: a highly heterogeneous initial state (high $\sigma$) combined with rapid exponential decay (high $\lambda$) produces a macroscopic output indistinguishable from a more homogeneous initial state with lower macroscopic decay. 
Biologically, this suggests that the Cry2KO mutation diminishes the system's synchronisation robustness over time. The progressive desynchronisation characteristic of the knockout mimics the macroscopic amplitude decay typically attributed to cell death, thereby confounding the estimation of $\lambda$.\\
Furthermore, the knockout condition induces a tighter functional linkage between the initial spread $\sigma$ and the network coupling parameters $\Delta_\theta$ and $K$. 
The positive correlation between $\sigma$ and the coupling window $\Delta_\theta$ nearly doubles from the WT to the Cry2KO condition (from $0.2$ for WT to $0.37$ for Cry2KO), alongside a slight increase in correlation with the coupling strength $K$ (from 0.28 for WT to 0.34 for Cry2KO). 
This indicates that in the Cry2KO network, achieving a specific synchronized state from a highly dispersed initial population requires a proportionally wider coupling window and stronger interaction strength. 
The WT network appears more robust, capable of overcoming initial heterogeneity with less reliance on specific coupling geometries.\\
Ultimately, the transition from the WT to the Cry2KO phenotype is characterised not merely by isolated parameter shifts, but by a global restructuring of the parameter dependency landscape. 
While it slightly relaxes its tightest dependency with $k_P$, it introduces stronger dependencies with the rest of the parameters, particularly with $\lambda$ and to a lesser extent with $\Delta_\theta$.
The identifiability of the long-term viability parameter $\lambda$ is significantly compromised in the knockout model due to newly formed interdependencies with the initial phase spread $\sigma$, with the coupling strength $K$ (from $0.023$ for WT to $0.25$ for Cry2KO) and with the coupling window size $\Delta_\theta$ (from $-0.018$ for WT to $0.13$ for Cry2KO). 
The Cry2KO mutation fundamentally alters the identifiability landscape, decoupling short-term reporter dynamics while introducing novel confounding effects between initial heterogeneity and long-term macroscopic signal decay and between the macroscopic signal decay and the coupling parameters of the oscillators' network.

\section{Perspectives}

	This article introduces a mathematical deterministic model based on the Kuramoto phase model to describes the circadian clocks of a population of cells and their interactions.
	To this end, a phase-dependent coupling interaction kernel is introduced, limiting oscillators' coupling to the cells whose phase of the circadian cycle lies within a specific interval.
	This assumption is motivated by the hypothesis formulated in \cite{finger2021intercellular}, which suggests that cells receive or interpret a chemical signal only during certain stages of the cycle.

A mathematical study of this adaptative Kuramoto model is proposed.
It is shown that, in the case of identical oscillators, the system tends to a complete phase synchronised state under some conditions.
These conditions are the same as those required to achieve complete synchronisation for the seminal Kuramoto model, with the addition of one more condition on the initial phase distribution dispersal and the length of the coupling window. 
An estimate of the speed of convergence toward this synchronised state is achieved.
	
This new mathematical formalism provides a more realistic description of an interacting network of cells' circadian clocks. 
A signal processing approach based on wavelet decomposition is proposed to extract relevant oscillatory features from experimental data and its Python code is available at \url{https://github.com/AnastasiaMARECHAL/InSyncPy}.
The mathematical model is then finely calibrated to the available data using two-step quasi-Monte Carlo algorithm.
Parameters distributions are obtained for two experimental settings: wild type cells and cells with the {Cry2} gene knocked out. 
A comparative study of the parameters dependencies is performed and highlights the phenotypic divergence. 
Though the knockout condition introduces more heterogeneity in the experimental data which reflects for a stronger biological variability, the mathematical model successfully reproduces the trajectory corresponding to the mean behaviour for both experimental conditions.
However, while it is shown that the wild type setting is capable of generating more robust oscillations and that the model can identify and characterise the coupling interactions, the knockout situation alters the identifiability landscape of the models parameters by revealing stronger compensatory phenomena which leads to an ill-posed calibration problem.

Building upon the adaptive Kuramoto framework developed in this study to describe cellular circadian clock interactions, several perspectives emerge. First, an extension of the current model to integrate heterogeneous oscillator networks presents a critical next step. 
The present analysis characterises the synchronization dynamics within isolated, homogenous experimental conditions. 
A natural progression involves the theoretical and experimental integration of these distinct phenotypes, such as co-culturing or simulating mixed populations. 
This approach would be assimilated to the study of non-identical oscillator networks, allowing for the rigorous investigation of how sub-populations with disparate intrinsic parameter distributions, phase dynamics, and coupling capacities interact to shape global entrainment. 
Analysing such mixed networks is essential for understanding the robustness of circadian synchronisation in physiologically complex tissues where diverse cellular phenotypes coexist and communicate.

Second, incorporating explicit spatial topology into the modelling framework would significantly enhance its biological realism. 
The current experimental data and corresponding model formulations neglect the spatial architecture, relying primarily on topological interaction networks without physical coordinates. 
However, spatial proximity inherently dictates the intercellular signalling. 
Introducing a spatial variable would enable the study of emergent, localised synchronisation phenomena such as travelling waves, localised phase clusters across a cellular architecture. 
To computationally and mathematically accommodate this spatial dimension, it would be highly advantageous to transition from the discrete Kuramoto model to its corresponding kinetic formulation. 
Operating in the continuum limit, the kinetic Kuramoto model is far better equipped to handle large-scale, continuous distributions of oscillators over a spatial domain, providing a robust mathematical framework to analyse the continuous spatio-temporal dynamics of cells' circadian clocks.

\insertcreditsstatement

\section*{Declarations}
This research was supported by the French National Agency for Research through project InSync ANR-22-CE45-0012-01 and by LABEX SIGNALIFE (ANR-11-LABX-0028) and the Idex program DYNABIO from Universit\'e C\^ote d'Azur. The authors declares no competing interests.
The authors would like to thank Dr G. Kon Kam King for insightful discussions and fruitful interactions.\\
\textbf{Availability of data and materials.} This article introduces a model of ODEs for formalising the network of circadian oscillators. 
Experimental data are used to inform and validate the mechanistic model.
The experimental methodology used to generate the datasets will be described elswhere (Krawczyk et al, in preparation) and all luminescent recording  data for the WT and Cry2KO experiments are available upon request.  
A signal processing procedure based on wavelet decomposition is proposed to extract quantitative features of the oscillatory dynamics. 
The code is available at \url{https://github.com/AnastasiaMARECHAL/InSyncPy}.
An algorithm based on previous work is then proposed to calibrate the deterministic model and measure the goodness of fit with experimental data.

\bibliographystyle{plain}
\bibliography{ref2.bib}

\appendix
	\section{Details on the Data Processing}
Several issues arise when analysing the time series coming from circadian clock experiments on spheroids of hepatocytes cells :
\begin{enumerate}
	\item the length of the signal is relatively short. 
	The duration of the experiments is limited by the protocol allowing to sustain alive hepatocytes in the spheroids. Since, no medium change is possible during the recording of the light intensity the cells are bound to die by a lack of necessary nutrients. 
	This means that it is only possible to record ten cycles at most.
	Moreover, the signal intensity is generally decreasing over time due to the cells' degradation.
	\item On this data set, the limited number of cycles combined to the averaged duration of 24 hours for the circadian clock makes it challenging to extract information on the frequency domain.
	The frequency of the features of interest needs to be precisely estimated on a frequency range up to $1e^{-4}$ Hz. 	
	Thus, the seminal way to perform a time frequency analysis of the signal using short-time Fourier transform is facing a key limitation in this study. 
	Circadian have long periods ($\approx$24 hours) and hence require long windows to accurately identify oscillatory features, however the fixed (and large) window size is particularly detrimental to the time localization of the features~\cite{sejdic2009time}.
	\item The quantitative descriptors that we want to extract from these signals are also affected by edge effects. 
	In the context of circadian data, which involves low-frequency oscillations, edge effects are especially problematic: meaningful features near the beginning or end of the recording (e.g., phase shifts, amplitude changes) can be misrepresented or attenuated, leading to inaccurate interpretation of biological rhythms. 
\end{enumerate}

Hence, in this nonparametric setting, we aim at detecting and quantitatively characterising from the data $(y_i)_\seqint$ the features displayed by $(x_i)_\seqint$ at a specific frequency range corresponding to experimentally observed circadian periods. 
Therefore, we need to investigate the shape of $(x_i)_\seqint$, which requires smoothing in order to get rid of the noise $(\xi_i)_\seqint$. 
Another difficulty is to separate the circadian clocks' features from the low frequency features emerging from experimental measurements and resulting in trends in the signal.

\subsection{Pre-processing: noise and trend removal, trimming the signal}\label{sec:ap_denoise_detrend}
Denoising and detrending a signal prior to time-frequency analysis enhances the interpretability and accuracy of extracted features by minimising confounding noise and slow drifts that overshadow true oscillatory content.
Denoising removes high-frequency artifacts that can leak down on the frequency axis in the time-frequency domain, while detrending removes low-frequency baseline shifts that may distort the localisation and amplitude of wavelet coefficients.
Thus, this pre-processing step improves spectral resolution, reduces edge artifacts, and facilitates robust detection of biologically relevant dynamics.\\

\paragraph{\textbf{Denoising processing.}} These signals display additive noise with damped oscillations, meaning that although the noise level remains constant through time, the ratio signal/noise is decreasing. 
Hence, the impact of the noise becomes more significant as time passes and affects negatively the quantitative estimators unevenly with respect to time.
The first step of the signal processing is an attempt to remove the white noise influence by filtering in the wavelet domain (c.f. Chap. 11 in \cite{mallat1999wavelet}).
This process corresponds to a low-pass filter that allows to keeps information below a frequency threshold $\omega$.\\ 
The method we use in this step, which we summarise below, was introduced in \cite{donoho1994ideal} and is based on selective wavelet reconstruction.
For the sake of simplicity, suppose that for our data $\left( y_i \right)_\seqint $  described in \eqref{eq:data_rep_gauss_s}, $N= 2^{J+1}$ for $J\in \mathds{N}$ and suppose that the signal of interest $x$ is sufficiently smooth.
Hence, one may construct an orthogonal matrix $\mathcal{W} \in \mathcal{M}_N(\mathds{K})$, denoted the finite wavelet transform matrix (c.f. Chap. 3 in \cite{daubechies1992ten}, Chap. 7 in \cite{mallat1999wavelet}), which is the discrete wavelet transform operator :
\begin{equation}
	\left(w_i\right)_\seqint = \mathcal{W}\left( y_i \right)_\seqint,
	\label{eq:dwt_sig}
\end{equation}
and the following inverse formula holds $$ \left(y_i\right)_\seqint = \mathcal{W}^T\left( w_i \right)_\seqint.$$
Usually, the vector $\left(w_i\right)_\seqint$ is indexed dyadically:
$$w_{j,k} : \quad j = 0,\hdots, J; \quad k = 0, \hdots, 2^j-1 $$
and we rewrite the signal as a sum of basis elements $W_{jk}$ with (wavelet) coefficients $w_{j,k}$:
$$y_i = \sum\limits_{j,k} w_{j,k} W_{jk}(i).$$
For the mother wavelet $\psi$, the basis elements are defined as the following:
$$\sqrt{N} W_{jk}(i) = 2^{j/2}\psi\left(2^j \tfrac{i}{N} -k \right). $$
Hence, the signal projected in the time-scale domain is expressed as the following:
\begin{equation}
	w_{j,k} = \theta_{j,k} + \sigma \xi_{j,k}, 
	\label{eq:sig_wav_basis}
\end{equation}
where $\xi_{j,k}$ are independent and identically distributed standard Gaussian noise. 
It follows from the Parceval formula that for the estimators denoted $\hat{x}$ and $\hat{\theta}$:
$$\mathds{E}\|x-\hat{x}\|^2_2 = \mathds{E}\|\theta-\hat{\theta}\|^2_2. $$ 
Finally, the process of reconstructing the denoised signal $\left(\hat{x}_i\right)_\seqint$ consists of performing the wavelet transform, applying a \textit{keep-or-kill} strategy with a hard thresholding and performing the inverse wavelet transform.
Hence, the estimators $\left(\hat{\theta}_{j,k}\right)_{j,k}$ is given by:
$$\hat{\theta}_{j,k} =w_{j,k}\mathds{1}\left\{w_{j,k} > \sqrt{2\log N}\hat{\sigma} \right\},$$
where $\hat{\sigma}$ is the estimated noise level corresponding to the median absolute deviation of $(w_{J,k})_k$ (the coefficients at the finest scale assumed to be pure noise). 
This assumption is justified in our case since there is a large gap between the frequency of the real signal features (around $1e-4$ Hz) and the frequency sampling ($1.67e-3$ Hz).\\
In conclusion, the reconstructed denoised signal is obtained using the inverse formula:
$$\hat{x} = \mathcal{W}^T \hat{\theta} $$
and, thanks to Theorem 4 in \cite{donoho1994ideal}, the following holds
\begin{equation}
	\mathds{E}\|x -\hat{x}\|^2_2 \leq L_N \left(\frac{\sigma^2}{N} + R_{N,\sigma}(y,x) \right),
\end{equation}  
where $L_N \sim 2\log N$ and 
$$R_{N,\sigma}(y,x) = \inf\limits_\delta \mathds{E} \left\|\sum\limits_{(j,k)\in\delta} w_{j,k}W_{jk} - x \right\|^2_2 .$$
Moreover suppose $x$ is a polynomial of finite order, the squared residuals $ R_{N,\sigma}(y,x) $ satisfies 
$$R_{N,\sigma}(y,x) = O\left(\frac{\sigma^2\log N}{N}\right). $$ 

\paragraph{\textbf{Detrending processing.}} Once the denoised signal $\left(\hat{x}_i\right)_\seqint$ is obtained , our goal is to remove the low frequency features that corresponds to experimental artefacts characterised by slow varying dynamics (changes on a time scale of several days). 
These low frequency features in the time-frequency domain correspond to the trend of the signal.  
An efficient way to remove these features is to minimise the penalised residual sum of squares (c.f. Chap. 5 in \cite{hastie2009elements}):
\begin{equation}\label{eq:smooth_splines}
	\mathcal{F}(\hat\vartheta)(\cdot) = \arg\!\min\limits_{f\in \mathcal{C}^2(\Omega)} \sum\limits_{i=0}^{N-1}\left(f(i\times dt) -\hat{x}_i\right)^2 + \lambda \int_\Omega f^{(2)}(u)du,
\end{equation}
where $\lambda$ is the smoothing parameters controlling the trade-off between the goodness-of-fit to the data and the smoothness of the trend estimators, where $\hat\vartheta\in\mathds{R}^N$ is the vector of estimated parameters which is a linear combination of $ \hat{x}_i$ and where the unique minimiser is a linear combination of natural cubic splines $(\phi_i)_\seqint$. 
Hence  $$\mathcal{F}(\hat\vartheta)(\cdot) = \sum\limits_{i=0}^{N-1}\hat\vartheta_i \phi_i(\cdot)$$
and since the $(\phi_i)_\seqint$ forms a set of basis functions of natural splines, the following holds (c.f. Chap. 5 in \cite{hastie2009elements}) :
$$\left(\hat\vartheta_i\right)_\seqint = S_\lambda \left(\hat{x}_i\right)_\seqint$$
and $ S_\lambda = M\left(M^T M + \lambda \Omega\right)^{-1} M^T$ 
with $M,\Omega \in \mathcal{M}_N(\mathds{R})$ and $M_{ij} = \phi_j(i\times dt) $ and $\Omega_{ij} =\int \phi_j(t)\phi_i(t)dt .$
We denote by $(\tilde{x}_i)_\seqint$ the detrended signal
\begin{equation}
	\tilde{x}_i =\hat{x}_i - \mathcal{F}(\hat\vartheta)(i \times dt), \quad i=0, \hdots, N-1.
	\label{eq:detrend_sig}
\end{equation}
It is important to emphasise two points. 
The first is that the smoothing parameter $\lambda$ plays a significant role in the trend approximation. If $\lambda =0$, the trend is any function interpolating the data and if $\lambda = \infty$, the trend is the best linear fit of the data.
The second point is that poor or no detrending  implies a persistence of edge effects that negatively impact the time-frequency analysis since the signal is non-stationary. \\

\paragraph{\textbf{Trimming the signal.}} The trimming preprocessing step is designed to remove the initial transient oscillation and to align the retained signal segment to zero before performing the time–frequency analysis.
The practical motivations are twofold: 
\begin{itemize}
	\item eliminating the initial oscillation reduces bias from start-up transients that contaminate wavelet estimates through edge effects. Indeed, the dexamethasone pulse used to synchronise the circadian clock also induces a transient, immediate and strong perturbation of the cells that last for few hours and which impact on multiple non circadian clock pathways.
	\item Cutting at a zero crossing reduces phase discontinuities at the trim point, improves the behavior of transforms that assume local stationarity or continuity \cite{torrence1998practical}.
\end{itemize}
The trimming preprocessing step is performed by localising the peaks of the absolute value of the denoised and detrended signal $\left(\tilde{x}_i\right)_\seqint$ with a minimal inter-peak distance set to $5$ hours. 
Then we discard the portion of the signal prior to the second detected peak to remove a full revolution.

\subsection{Quantitative descriptors of the circadian oscillators for a population of cells}
\label{section:ap_descriptors}
The experimental signals processed in this study have damped harmonic oscillations and exhibit a nonstationary behaviour. 
The main idea is to characterise the damping of the oscillations 
and to quantify the period evolution over time using the continuous wavelet transform (CWT) and its Synchrosqueezed Transform (SST) on a denoised and detrended signal. 
In the following, we denote $(\tilde{x}_i)_\seqint$ the signal assimilated to $(x_i)_\seqint$ where the trend features have been removed.\\

\paragraph{\textbf{About the CWT.}} The continuous wavelet transform (CWT) provides a powerful framework for time-frequency analysis of non-stationary signals (cf. \cite{daubechies1992ten,mallat1999wavelet}).
For a signal $x(\cdot) \in L^2(\mathds{R})$ and a mother wavelet $\psi \in L^2(\mathds{R})$ (i.e. a normalised function with an average value equal to $0$), the continuous wavelet transform of $f$ is defined as:
\begin{equation*}
	W_x(a,b) = \frac{1}{\sqrt{a}} \int_{-\infty}^{\infty} x(t) \overline{\psi\left(\frac{t-b}{a}\right)} dt
\end{equation*}
where $\overline{\psi}$ denotes the complex conjugate of $\psi$. The parameter $a > 0$ is the scale parameter (inversely related to the frequency), $b \in \mathds{R}$ is the translation parameter linked to the time localisation.\\

\paragraph{\textbf{About the SST.}}The synchrosqueezed wavelet transform (SST) \cite{daubechies2009synchrosqueezed} is often used to improve the time-frequency resolution involving a rigorous energy reassignment method.
The fundamental mechanism of the SST relies on extracting the instantaneous frequency directly from the CWT. 
For any point $(a, b)$ where the wavelet coefficients are non-vanishing ($W_x(a, b) \neq 0$), the two-dimensional candidate instantaneous frequency $\omega(a, b)$ is computed as the analytical phase derivative with respect to time:
$$\omega(a, b) = -i \frac{1}{W_x(a, b)} \frac{\partial W_x(a, b)}{\partial b}$$
Once computed, the SST reallocates the diffuse oscillatory energy from the time-scale domain $(b, a)$ directly to the time-frequency domain $(b, \omega)$ by "squeezing" it along the frequency axis. 
Formally, the discrete synchrosqueezed transform $T_x(\omega_l, b)$ is defined by mapping the wavelet coefficients to a center frequency $\omega_l$ within a discrete frequency bin $[\omega_l - \Delta\omega/2, \omega_l + \Delta\omega/2]$ of resolution $\Delta\omega$:
$$T_x(\omega_l, b) = \frac{1}{\Delta\omega} \sum_{a_k \in \Omega_l} W_s(a_k, b) a_k^{-3/2} (\Delta a)_k$$
where $a_k$ are the logarithmically spaced discrete scales, $\Omega_l =\{ a: |\omega(a, b) - \omega_l| \le \Delta\omega/2\}$ is the frequency window of length $\Delta \omega$ and center $\omega_l$  and $(\Delta a)_k = a_k - a_{k-1}$ is the scale step. 
Consequently, the original signal can be robustly reconstructed by integrating the synchrosqueezed representation over the frequency axis:
$$x(t) = \text{Re} \left[ \frac{1}{C_\psi} \sum_{l} T_x(\omega_l, t) \Delta\omega \right]$$
where $C_\psi$ is the admissibility constant of the mother wavelet $\psi$.

\paragraph{\textbf{Quantitative descriptors of the oscillations using the ridges of the SST.}}
 Signals of circadian rhythms are slow-varying oscillating signals with a damped amplitude. 
They may reasonably be considered as a monocomponent signal $x(t) = A(t)e^{i\phi(t)}$ with slowly varying amplitude 
\begin{equation*}
	A(t)=\exp(-\lambda t)
\end{equation*} 
with $\lambda >0$ and slowly varying phase $\phi(t)$. 
Hence, the quantitative descriptors of the oscillatory features in the signal corresponds to the time-frequency energy localisation given by the SST \cite{delprat1992asymptotic}. \\
Denote $\phi'(t) = \omega(t)$ the instantaneous frequency, then the curves shaped by the local maxima of the CWT in the time-scale plane follow the instantaneous frequency. 
Hence, the \textbf{quantitative descriptor of the period evolution} is given by 
\begin{equation*}
	\omega_r(b) = \arg\max_\omega |T_x(\omega,b)|,
\end{equation*}
and the instantaneous amplitude is 
$$A_{\rm inst}(b) = |T_x(\omega_r(b), b)|.$$
The parameter of the \textbf{amplitude decay}, $\lambda$, is estimated by fitting the instantaneous amplitude obtained with the CWT with a model $t\mapsto \exp(-\lambda t).$

The continuous wavelet transform and synchrosqueezed transform provide a robust mathematical framework for instantaneous frequency estimation through ridge extraction which ensures reliable performance under appropriate conditions: reasonable signal-to-noise ratio, sufficiently slow rate of frequency variation, the correct wavelet choice and its time-frequency concentration, etc (cf. \cite{lilly2017element}).

Several methods are available to extract the ridges 
\cite{carmona1997characterization, slavivc2003damping, qin2016adaptive} and they mostly involve:  
\begin{itemize}
	\item to compute the CWT over appropriate scales and times
	\item to find the frequency $\omega_r(b)$ that maximises $|T_x(\omega,b)|$ for each time localisation parameter $b$,
	\item to apply smoothing constraints to ensure the ridge continuity,
	\item and to convert the ridge scales to instantaneous frequency.
\end{itemize}
The key strength of the CWT-based approach lies in its adaptive time-frequency resolution, making it particularly suitable for analysing signals with time-varying spectral content. 
Moreover, the features extraction using this method also has the advantage of enabling accurate estimation despite a very low signal-to-noise ratio (cf. Figure \ref{fig:sanity_check}).

	\begin{figure}[h!]
	\centering
	\includegraphics[width=\textwidth]{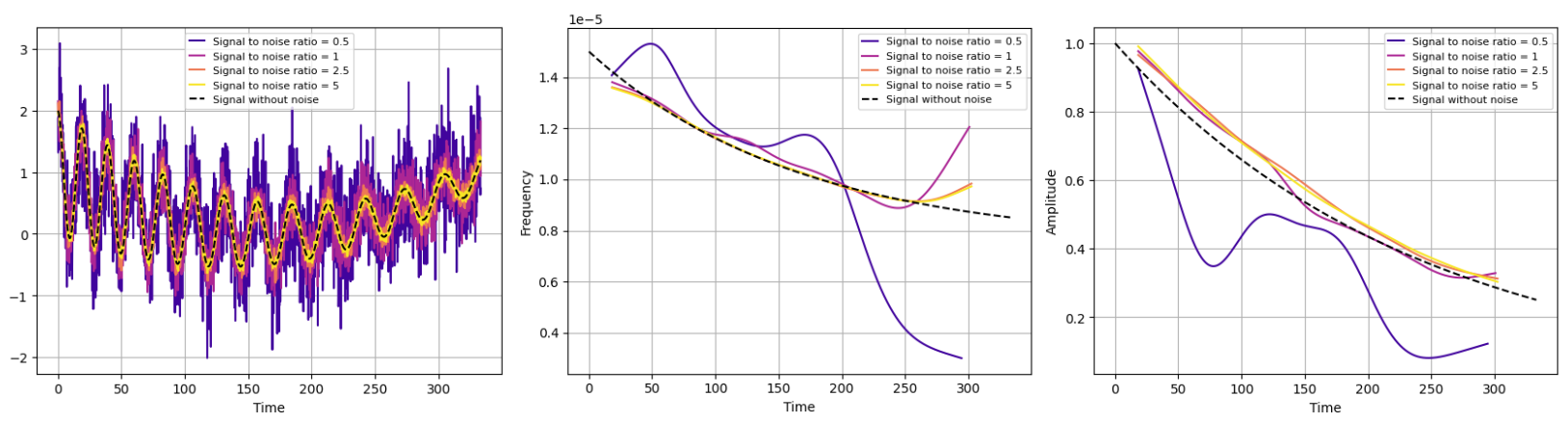}
	\caption{\small
		Robustness of the wavelet-based feature extraction method in the presence of noise. 
		The signal-to-noise ratio modulates the additive noise effect. 
		(Left) Synthetic signal (dashed black) with additive noise at multiple levels (color), illustrating increasing signal degradation. 
		(Middle) Estimated instantaneous frequency for each noise level.
		(Right) Estimated instantaneous amplitude for each noise level.  
		The method reliably extracts the amplitude and period for signal-to-noise ratios up to 0.5, thereby demonstrating its robustness against additive noise.
		}
	\label{fig:sanity_check}
\end{figure}

\section{Modelling assumptions}
\label{sec:ap_para}

In this Appendix, we detail the modelling choice about the parameters values and the numerical methods.
Table \ref{tab:fixed_parameters} recalls the parameters that are fixed \textit{a priori} based on established properties of mammalian circadian clocks and for computational convenience.

\begin{table}[ht]
\centering
\begin{tabular}{|c|cccc|}
\hline
Parameters & $L^{\rm WT}$ & $L^{\rm Cry2KO}$ & $\Omega_i$ & $P^0$ \\
\hline
Values & $24.46~\mathrm{h}$ & $28.42~\mathrm{h}$ & $1$ & Eq.\eqref{eq:P_0} \\
\hline
\end{tabular}
\caption{Fixed parameters for the system \eqref{eq:adaptive_kura}-\eqref{eq:luci}.}
\label{tab:fixed_parameters}
\end{table}

\noindent Parameters dictating the dynamical behaviour of the system are described in Section \ref{sec:param_filter} and the ranges of those parameters are given in Table \ref{tab:para}.
In the following, we give supplementary information for the sake of clarity and reproducibility.
\\

\noindent\textbf{Additional information about the spread of the initial condition $\sigma$:}

\begin{minipage}{0.64\textwidth}
   The synchronisation state of the population at initial time of the experiment is unknown. Then the initial phase distribution is parametrised to account for different levels of synchronisation and we assume it follows a truncated Gaussian distribution on the interval $[0,L]$, centered at $L/2$ with density $f$ such that:
$$
	f(\phi)d\phi=\frac{\exp\left(-\frac{(\phi-L/2)^2}{2\sigma^2}\right)}
	{\int_0^L \exp\left(-\frac{(x-L/2)^2}{2\sigma^2}\right)\,dx} d\phi,
	 \phi\in[0;L],
	$$
	where parameter $\sigma\in[10^{-2},10^{2}]$ controls the spread of the distribution.
\end{minipage}
\hfill
\begin{minipage}{0.35\textwidth}
	\includegraphics[width=\textwidth]{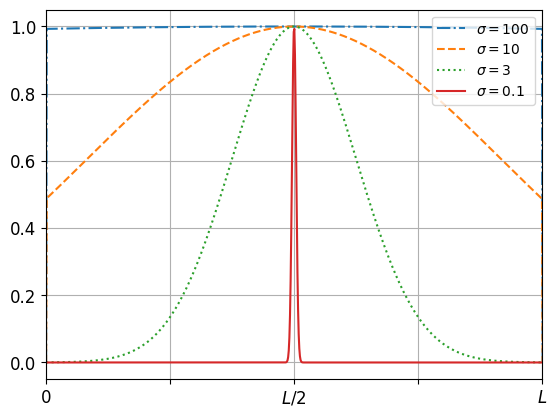}
\end{minipage}
\\

\noindent\textbf{Additional information about the width of the coupling window $\Delta_\theta$:} The coupling window is defined with two additional parameters $\theta_\text{min}$ (resp. $\theta_\text{max}$) which characterised the beginning (resp. the end) of the stage of the circadian cycle where cells are susceptible to intercellular coupling. 
We set $\theta_{\min} = L/2 - \Delta_\theta/2$ and $\theta_{\max} = L/2 + \Delta_\theta/2$, which guarantees that the coupling window $[\theta_{\min},\theta_{\max}]$ is contained within the circadian cycle interval $[0,L]$.
\\

\begin{table}[ht]
\centering
\begin{tabular}{|c|cccc|}
\hline
Parameters & $N$ & $dt$ & $T_{\max}$ & $s$ \\
\hline
Values & 100 & 10 min & $2T_{\mathrm{data}}$ & 20 \\
\hline
\end{tabular}
\caption{Discretisation parameters used for numerical simulations.}
\label{tab:numerical_parameters}
\end{table}

\noindent\textbf{Additional information about the discretisation parameters:} The parameters for the numerical approximations are described in Table \ref{tab:numerical_parameters}. 
The simulations are performed in Python using the solve\_ivp solver with the LSODA method and the following is assumed: 
\begin{itemize}
    \item \textbf{Number of oscillator $N$.}
Since we compare the normalised output $Y$ of equation \eqref{eq:luci} with the normalised data, the number of oscillator does not directly affects the comparison. 
To keep a balance between the biological reality and computational efficiency, we choose to simulate a population of $N = 100$ oscillators.
\item \textbf{Time step of the computation $dt$.} The time step used for the simulations is chosen to match the sampling interval of the experimental data, i.e., 10 minutes, allowing direct comparison between the model output and the data.
\item \textbf{Maximum simulation time $T_{\max}$.} To ensure that the model output can be compared with the experimental data, the maximum simulation time is set to twice the maximum duration of the dataset.
\end{itemize}

\noindent
\begin{minipage}{0.6\textwidth}
\begin{itemize}
    \item \textbf{Phase localised coupling function $\varphi$.}
We define $\varphi$ on a single cycle and extend it by periodicity to obtain an $L$-periodic function. 
Numerically, $\varphi$ is given by
$$\varphi(\theta) = \frac{\tanh(s(\theta - \theta_{\min})) + \tanh(s(\theta_{\max} - \theta))}{2}.$$
\end{itemize}
\end{minipage}
\hfill
\begin{minipage}{0.4\textwidth}
	\begin{tikzpicture}[scale=0.5]
		
		\def\L{24}
		\def\thetamin{8}
		\def\thetamax{16}
		\def\sa{20}
		\def\sb{2}
		
		\begin{axis}[
			width=11cm,
			height=6cm,
			xlabel={$\theta$},
			ylabel={$\varphi(\theta)$},
			domain=0:\L,
			samples=300,
			axis lines=left,
			ymin=0, ymax=1.1,
			xmin=0, xmax=\L,
			xtick={0,\thetamin,\thetamax,\L},
			xticklabels={$0$,$\theta_{\min}$,$\theta_{\max}$,$L$},
			legend style={
				at={(0.98,0.98)},
				anchor=north east,
				draw=none,
				fill=none
			}
			]

			\addplot[blue, thick] 
			{(tanh(\sa*(x-\thetamin)) + tanh(\sa*(\thetamax-x)))/2};
			\addlegendentry{$s=20$}
			
			\addplot[red, thick] 
			{(tanh(\sb*(x-\thetamin)) + tanh(\sb*(\thetamax-x)))/2};
			\addlegendentry{$s=2$}
		\end{axis}
		
	\end{tikzpicture}
\end{minipage}
\begin{itemize}
    \item[ ] As seen in the figure, the parameter $s$ defines the slope of the function: the larger $s$ is, the closer $\varphi$ approaches an indicator function.
We fix $s = 20$ to balance the smoothness of the function with a sufficiently sharp transition at the edges of the coupling interval.
\end{itemize}

\end{document}